\newtheorem{thm}{Theorem}[section]
\newtheorem{lem}{Lemma}[section]
\newtheorem{remark}{Remark}[section]
\newcommand{\eps}{\varepsilon}
\newcommand{\nab}{\nabla}
\newcommand{\p}{\partial}
\newcommand{\Ome}{\Omega}
\newcommand{\Del}{\Delta}
\newcommand{\be}{{\mathbf{e}}}
\newcommand{\bw}{{\mathbf{w}}}
\newcommand{\bv}{{\mathbf{v}}}
\newcommand{\bu}{{\mathbf{u}}}
\newcommand{\bR}{{\mathbb{R}}}
\newcommand{\cT}{{\mathcal{T}}}
\newcommand{\lt}{{L^2}}
\newcommand{\ho}{{H^1}}
\newcommand{\htw}{{H^2}}
\newcommand{\Div}{{\mbox{\rm div\,}}}
\newcommand{\vepsi}{{\varepsilon}}
\newcommand{\spa}{\hspace{1cm}}
\newcommand{\normd}[1]{\frac{\partial #1}{\partial \nu}}
\newcommand{\oalpha}{{\overline{\alpha}}}
\newcommand{\oxi}{{\overline{\xi}}}
\newcommand{\oomega}{{\overline{\omega}}}
\title{A modified characteristic finite element method for a fully nonlinear
formulation of the semigeostrophic flow equations\footnote{The work of both 
authors was partially supported by the NSF grants DMS-0410266 and DMS-0710831.}}
\author{
Xiaobing Feng\thanks{Department of Mathematics, The University of
Tennessee, Knoxville, TN 37996, U.S.A.  ({\tt xfeng@math.utk.edu}).}
\and 
Michael Neilan\thanks{Department of Mathematics, The University of
Tennessee, Knoxville, TN 37996, U.S.A.  ({\tt neilan@math.utk.edu}).}
}
\begin{document}

\maketitle


\setcounter{page}{1}

\large
\baselineskip 13.5pt
\begin{abstract}
This paper develops a fully discrete modified characteristic 
finite element method for a coupled system consisting of
the fully nonlinear Monge-Amp\'ere equation and a transport
equation.  The system is the Eulerian formulation in the dual
space for the B. J. Hoskins' semigeostrophic flow equations,
which are widely used in meteorology to model slowly varying flows
constrained by rotation and stratification. To overcome the
difficulty caused by the strong nonlinearity, we first 
formulate (at the differential level) a vanishing moment
approximation of the semigeostrophic flow equations, 
a methodology recently proposed by the authors \cite{Feng1,Feng2},
which involves approximating the fully nonlinear Monge-Amp\'ere equation 
by a family of fourth order quasilinear equations.
We then construct a fully discrete modified characteristic 
finite element method for the regularized problem.
It is shown that under certain mesh and time stepping
constraints, the proposed
numerical method converges with an optimal order rate of 
convergence. In particular, the obtained error bounds 
show explicit dependence on the regularization parameter $\vepsi$.
Numerical tests are also presented to validate the theoretical 
results and to gauge the efficiency of the proposed fully discrete 
modified characteristic finite element method.
\end{abstract}

\begin{keywords}
semigeostrophic flow, fully nonlinear PDE, viscosity solution, 
modified characteristic method, finite element method, error analysis
\end{keywords}

\begin{AMS}
65M12, 
65M15, 
65M25, 
65M60, 
\end{AMS}

\section{Introduction}\label{sec-1}
The semigeostrophic flow equations, which were derived by 
B. J. Hoskins \cite{hoskins75},
is used in meteorology to model slowly varying flows constrained by rotation 
and stratification. They can be considered as an approximation of the Euler 
equations and are thought to be an efficient model to describe front formation 
(cf. \cite{Loeper_06,Cullen_Douglas99}).  Under certain assumptions and in some 
appropriately chosen curve coordinates (called `dual space', see Section \ref{sec-2}), 
they can be formulated as the following coupled system consisting of the 
fully nonlinear Monge-Amp\'ere equation and the transport equation:
\begin{alignat}{2}\label{intro1}
\det(D^2\psi^*)&=\alpha\spa &&\text{in }\mathbb{R}^3\times (0,T],\\
\frac{\p \alpha}{\p t}+\Div(\mathbf{v}\alpha)&=0\spa &&\text{in }\mathbb{R}^3\times 
(0,T], \label{intro2}\\
\alpha(x,0)&=\alpha_0\spa &&\text{in }\mathbb{R}^3\times \{t=0\}, 
\label{intro3} \\
\nabla \psi^*&\subset \Omega, \label{intro3a}
\end{alignat}
and
\begin{align}\label{vdef}
\mathbf{v}=(\nab \psi^* - x)^\bot=(\psi^*_{x_2}-x_2,x_1-\psi^*_{x_1},0).
\end{align}
Here, $\Ome\subset \bR^3$ is a bounded domain, 
$\alpha$ is the density of a probability measure on $\mathbb{R}^3$, 
 and $\psi^*$ denotes the Legendre transform of a convex function $\psi$. 
For any $\bw=(w_1,w_2,w_3)$, $\bw^\bot:=(w_2,-w_1,0)$. We note 
that none of the variables $\alpha, \psi^*$, and $\bv$ in the system
is an original primitive variable appearing in the Euler equations.
However, all primitive variables can be conveniently recovered from these 
non-physical variables (see Section \ref{sec-2} for the details).  

In this paper, our goal is to numerically approximate the solution of
\eqref{intro1}--\eqref{vdef}.  By inspecting the above system, 
one easily observes that there are three clear difficulties for
achieving the goal. First, the equations are posed over an unbounded 
domain, which makes numerically solving the system infeasible. 
Second, the $\psi^*$-equation is the fully nonlinear Monge-Amp\'ere equation. 
Numerically, little progress has been made in approximating second order 
fully nonlinear PDEs such as the Monge-Amp\'ere equation. 
Third, equation \eqref{intro3a} imposes a nonstandard
constraint on the solution $\psi^*$, which often 
is called the second kind boundary condition for $\psi^*$ in the PDE community 
(cf. \cite{Benamou_98,Cullen_Douglas99}).

As a first step to approximate the solution of the above system, we must 
solve \eqref{intro1}--\eqref{intro3} over a finite domain, $U\subset
\mathbb{R}^3$, which then calls for the use of artificial boundary 
condition techniques. For the second difficulty, we recall that
a main obstacle is the fact that weak solutions (called viscosity 
solutions) for second order nonlinear PDEs are non-variational. This
poses a daunting challenge for Galerkin type numerical methods such
as finite element, spectral element, and discontinuous Galerkin
methods, which are all based on variational formulations of PDEs.
To overcome the above difficulty, recently we introduced a new approach 
in \cite{Feng1,Feng2,Feng3,Feng4,Neilan_08}, called
the vanishing moment method in order to approximate viscosity solutions of
fully nonlinear second order PDEs.  This approach gives rise a new
notion of weak solutions, called moment solutions, for fully
nonlinear second order PDEs. Furthermore, the vanishing moment
method is constructive, so practical and convergent numerical
methods can be developed based on the approach for computing 
viscosity solutions of fully nonlinear second order PDEs. The main idea
of the vanishing moment method is to approximate a fully nonlinear
second order PDE by a quasilinear higher order PDE.  In this paper,
we apply the methodology of the vanishing moment method, and
approximate \eqref{intro1}--\eqref{intro3} by the following
fourth order quasi-linear system:
\begin{alignat}{2}\label{intro4}
-\eps\Delta^2\psi^\eps+\det(D^2\psi^\eps)
&=\alpha^\eps\spa &&\text{in }U\times (0,T],\\ 
\frac{\p \alpha^\eps}{\p t}+\Div(\mathbf{v}^\eps\alpha^\eps)
&=0\spa &&\text{in }U\times (0,T], \label{intro5}\\
\alpha^\eps(x,0)&=\alpha_0(x)\spa &&\text{in }\bR^3\times \{t=0\}, \label{intro6}
\end{alignat}
where
\begin{align}\label{vepsdef}
\bv^\eps:=(\nab \psi^\vepsi-x)^\bot=(\psi^\eps_{x_2}-x_2,x_1-\psi^\eps_{x_1},0).
\end{align}
It is easy to see that \eqref{intro4}--\eqref{vepsdef} is underdetermined,
so extra constraints are required in order to ensure uniqueness.
To this end,  we impose the following boundary conditions and constraint
to the above system:
\begin{alignat}{2}\label{intro7}
\normd{\psi^\eps}&=0\spa &&\text{on }\partial U\times (0,T],\\
\normd{\Delta \psi^\eps}&=\eps\spa &&\text{on }\partial U\times (0,T],
\label{intro8}\\
\int_U \psi^\eps dx&=0\spa &&t\in (0,T], \label{intro9}
\end{alignat}
where $\nu$ denotes the unit outward normal to $\p U$. 
We remark that the choice of \eqref{intro8} intends to minimize the 
boundary layer due to the introduction of the singular perturbation 
term in \eqref{intro4} (see \cite{Feng1} for more discussions).  
Boundary condition \eqref{intro7} is used to 
minimize the ``reflection" due to the introduction of the 
finite computational domain $U$.  It can be regarded as a 
simple radiation boundary condition.  An additional consequence
of \eqref{intro7} is that it also effectively overcomes the third 
difficulty, which is caused by the nonstandard constraint 
\eqref{intro3a}, for solving system \eqref{intro1}--\eqref{vdef}. 
Clearly, \eqref{intro9} is purely a mathematical technique 
for selecting a unique function from a class of functions 
differing from each other by an additive constant.

The specific goal of this paper is to formulate and analyze
a modified characteristic finite element method for problem
\eqref{intro4}--\eqref{intro9}. The proposed method 
approximates the elliptic equation for $\psi^\vepsi$ by conforming 
finite element methods (cf. \cite{Ciarlet78}) 
and discretizes the transport equation for $\alpha^\vepsi$ by 
a modified characteristic method due to Douglas and Russell 
\cite{Douglas_Russell_82}. We are particularly interested in
obtaining error estimates that show explicit dependence on $\vepsi$ 
for the proposed numerical method.

The remainder of this paper is organized as follows.  In Section
\ref{sec-2}, we introduce the semigeostrophic flow equations and show
how they can be formulated as the Monge-Amp\'ere/transport system
\eqref{intro1}--\eqref{vdef}. In Section \ref{sec-3}, we apply
the methodology of the vanishing moment method to approximate
\eqref{intro1}--\eqref{vdef} via \eqref{intro4}--\eqref{intro9}, 
prove some properties of this approximation, and also state certain 
assumptions about this approximation.  We then formulate our modified 
characteristic finite element method to numerically
compute the solution of \eqref{intro4}--\eqref{intro9}.
Section \ref{sec-4} mirrors the analysis found in \cite{Feng4} 
where we analyze the numerical solution of
the Monge-Amp\'ere equation under small perturbations of the data.
Section \ref{sec-4} is of independent interests in itself, but the main 
results will prove to be crucial in the next section.  In Section
\ref{sec-5}, under certain mesh and time stepping constraints,
we establish optimal order error estimates for the proposed
modified characteristic finite element method. The main idea of 
the proof is to use the results of Section \ref{sec-4} and an 
inductive argument.  Finally, in Section \ref{sec-6}, we provide 
numerical tests to validate the theoretical results of the paper.

Standard space notation is adopted in this paper, we refer to
\cite{Brenner,Gilbarg_Trudinger01,Ciarlet78} for their exact
definitions. In particular, $(\cdot,\cdot)$ and $\langle\cdot,\cdot\rangle$
denote the $L^2$-inner products on $U$ and $\p U$, respectively. $C$ is 
used to denote a generic positive constant which is independent of
$\vepsi$ and mesh parameters $h$ and $\Delta t$.

\section{Derivation of the Monge-Amp\'ere/transport formulation for
the semigeostrophic flow equations}\label{sec-2}
For the reader's convenience and to provide necessary background,
we shall first give a concise derivation of the Hoskins' semigeostrophic
flow equations \cite{hoskins75} and then explain how the Hoskins' model
is reformulated as a coupled Monge-Amp\'ere/transport system. 
Although our derivation essentially follows those
of \cite{hoskins75, Cullen_Douglas99, Benamou_98}, we shall make an effort
to streamline the ideas and key steps in a way which we thought
should be more accessible to the numerical analysis community.

Let $\Ome\subset \bR^3$ denote a bounded domain of the {\em troposphere}
in the atmosphere. It is well known \cite{Majda} that if fluids are assumed 
to be incompressible, their dynamics in such a domain $\Ome$ are governed by 
the following incompressible Boussinesq equations which are a version of the 
incompressible Euler equations:
\begin{alignat}{2}\label{Euler1}
\frac{D\bu}{Dt} + \nab p  &=f \bu^{\bot} 
-\frac{\theta}{\theta_0} g\be_3 &&\qquad \text{in }\Omega\times (0,T],\\ 
\frac{D\theta}{Dt} &= 0 &&\qquad \text{in }\Omega\times (0,T], \label{Euler2} \\
\Div\bu &=0  &&\qquad \text{in }\Omega\times (0,T], \label{Euler3} \\
\mathbf{u}&=\mathbf{0} &&\qquad\text{on }\partial\Omega\times (0,T], \label{Euler4}
\end{alignat}
where $\be_3:=(0,0,1)$, $\mathbf{u}=(u_1,u_2,u_3)$ is the velocity field, 
$p$ is the pressure, $\theta$ either denotes the temperature (in the case of 
atmosphere) or the density (in the case of ocean) of the fluid in question. 
$\theta_0$ is a reference value of $\theta$.
Also
\[
\frac{D}{Dt}:= \frac{\p}{\p t} +\bu\cdot \nab  
\]
denotes the material derivative. Recall that $\bu^\bot:=(u_2,-u_1,0)$.
Finally, $f$, assumed to be a positive constant, is known as 
{\em the Coriolis parameter}, and $g$ is the gravitational acceleration constant.
We note that the term $f\bu^\bot$ is the so-called Coriolis force which is
an artifact of the earth's rotation (cf. \cite{Salmon}).  

Ignoring the (low order) material derivative term in \eqref{Euler1} we get 
\begin{align} \label{substitution}
\nab_H p &= f \bu^\bot, \\
\frac{\p p}{\p x_3} &= -\frac{\theta}{\theta_0} g, \label{substitution1}
\end{align}
where
\[
\nab_H:=\Bigl(\frac{\p}{\p x_1}, \frac{\p}{\p x_2},0\Bigr).
\]
Equation \eqref{substitution} is known as {\em the geostrophic balance}, 
which describes the balance between the pressure gradient force 
and the Coriolis force in the horizontal directions. Equation \eqref{substitution1} 
is known as {\em the hydrostatic balance} in the literature, which describes the 
balance between the pressure gradient force and the gravitational force 
in the vertical direction.  Define
\begin{equation}\label{wind}
\bu_g:=f^{-1} (\nab p)^\bot\qquad\mbox{and}\qquad
\bu_{ag}:=\bu-\bu_g,
\end{equation}
which are often called {\em the geostrophic wind} and {\em ageostrophic wind},
respectively.

The geostrophic and hydrostatic balances give very simple relations
between the pressure field and the velocity field. However, the dynamics of
the fluids are missing in the description. To overcome this limitation,
J. B. Hoskins \cite{hoskins75} proposed so-called semigeostrophic 
approximation which is based on replacing the material derivative term 
$\frac{D \bu}{D t}$ by $\frac{D \bu_g}{D t}$ in \eqref{Euler1}. This then 
leads to the following semigeostrophic flow equations (in the primitive 
variables):
\begin{alignat}{2}\label{semigeoapprox}
\frac{D\bu_g}{Dt} + (\nab p)^\bot &= f \bu^{\bot} 
&&\qquad \text{in }\Omega\times (0,T],\\
\frac{\p p}{\p x_3} &= -\frac{\theta}{\theta_0} g  
&&\qquad \text{in }\Omega\times (0,T], \label{semigeoapprox1} \\
\frac{D\theta}{Dt} &= 0 &&\qquad \text{in }\Omega\times (0,T], 
\label{semigeoapprox2} \\
\Div\bu &=0 &&\qquad \text{in }\Omega\times (0,T], \label{semigeoapprox3} \\
\bu &= 0  &&\qquad \text{on }\p\Omega\times (0,T].  \label{semigeoapprox4}
\end{alignat}

It is easy to see that after substituting $\bu_g= f^{-1}(\nab p)^\bot$,
\eqref{semigeoapprox} is an evolution equation for $(\nab p)^\bot$.
There are no explicit dynamic equations for $\bu$ in the 
above semigeostrophic flow model. Also, by the definition of the material
derivative, $\frac{D\bu_g}{Dt}=\frac{\p \bu_g}{\p t} 
+ (\bu\cdot \nab) \bu_g$. We note that the full velocity 
$\bu$ appears in the last term. Should $\bu\cdot \nab$ be replaced by 
$\bu_g\cdot \nab$ in the material derivative, the resulting model 
is known as {\em the quasi-geostrophic flow equations} (cf. \cite{Majda}).

Due to the peculiar structure of the semigeostrophic flow equations,
it is difficult to analyze and to numerically solve the equations. 
The first successful analytical approach is the one based on the
fully nonlinear reformulation \eqref{intro1}--\eqref{vdef}, which was 
first proposed in \cite{Brenier91} and was further developed 
in \cite{Benamou_98, Loeper_06} (see \cite{cullen_feldman06} for
a different approach).  The main idea of the reformulation 
is to use time-dependent curved coordinates so the resulting 
system becomes partially decoupled. Apparently, the trade-off
is the presence of stronger nonlinearity in the new formulation.
 
The derivation of the fully nonlinear reformulation 
\eqref{intro1}--\eqref{vdef} starts with introducing 
the so-called {\em geopotential} and {\em geostrophic transformation}
\begin{align}\label{psidefinition}
\psi:=\frac{p}{f^2}+\frac{1}{2}|x_H|^2,\qquad \Phi:=\nab \psi;
\qquad\mbox{where}\qquad x_H:=(x_1,x_2,0).
\end{align}
A direct calculation verifies that
\begin{align*}
\Phi:= x_H + \frac{1}{f^2}(\nab p)^\bot -\frac{\theta}{\theta_0 f^2} g\be_3  
= x_H + \frac{1}{f} \bu_g -\frac{\theta}{\theta_0 f^2} g \be_3,
\end{align*}
consequently,  \eqref{semigeoapprox}--\eqref{semigeoapprox2} can be 
rewritten compactly as 
\begin{align}\label{transform1}
\frac{D \Phi}{Dt} =fJ(\Phi-x),
\end{align}
where
\[
J=\left( \begin{array}{rrr}
           0 & -1 & 0 \\
           1 & 0 & 0 \\
           0 & 0 & 0 
          \end{array} \right).
\]

For any $x\in \Ome$, let $X(x,t)$ denote the fluid particle trajectory originating
from $x$, i.e., 
\begin{align*}
\frac{d X(x,t)}{dt} &=\bu(X(x,t),t) \qquad\forall\, t>0,\\ 
X(x,0) &=x.
\end{align*}
Define the composite function
\begin{align}\label{transform2}
\Psi(\cdot,t):=\Phi(\cdot,t)\circ X(\cdot,t)=\Phi(X(\cdot,t),t) 
=\nab \psi (X(\cdot,t),t).
\end{align}
Then we have from \eqref{transform1}
\begin{align}\label{transform3}
\frac{\p \Psi(x,t)}{\p t} =f J (\Psi(x,t)-X(x,t))=f (\Psi(x,t)-X(x,t))^\bot. 
\end{align}
Since the incompressibility assumption implies $X$ is volume preserving,
\[
\det(\nab X)=1,
\]
which is equivalent to
\begin{equation}\label{weak}
\int_\Ome g(X(x,t)) dx =\int_\Ome g(x) dx \qquad\forall\,g\in C(\overline{\Ome}).
\end{equation}

To summarize, we have reduced  \eqref{semigeoapprox}--\eqref{semigeoapprox3} 
into \eqref{transform2}--\eqref{weak}.  It is easy to see that $\Psi(x,t)$ 
is not unique because one has a freedom in choosing the geopotential
$\psi$. However, Cullen, Norbury, and Purser \cite{cnp91}  
(also see \cite{Cullen_Douglas99, Benamou_98, Loeper_06}) discovered 
the so-called {\em Cullen-Norbury-Purser principle} which says that
$\Psi(x,t)$ must minimize the geostrophic energy at each time $t$. 
A consequence of this minimum energy principle is that the 
geopotential $\psi$ must be a convex function. Using the assumption 
that $\psi$ is convex and Brenier's polar factorization theorem \cite{Brenier91},
Brenier and Benamou \cite{Benamou_98} proved existence 
of such a convex function $\psi$ and a measure preserving mapping 
$X$ which solves \eqref{transform2}--\eqref{weak}.

To relate \eqref{transform2}--\eqref{weak} with \eqref{intro1}, \eqref{intro2},
and \eqref{intro3a}, let $\alpha(y,t)dy$ be the image measure of the Lebesgue 
measure $dx$ by $\Psi(x,t)$, that is
\begin{align*}
\int_\Omega g(\Psi(x,t))dx=\int_{\mathbb{R}^3}
g(y)\alpha (y,t)dy\spa \forall g\in C_c(\mathbb{R}^3).
\end{align*}
We note that the image measure $\alpha(y,t) dy$ is the 
push-forward $\Psi_{\#}dx$ of $dx$ by $\Psi(x,t)$, and $\alpha(y,t)$ is 
the density of $\Psi_{\#}dx$ with respect to the Lebesgue measure $dy$.

Assume that $\psi$ is sufficiently regular, it follows from 
\eqref{transform2} and \eqref{weak} that 
\begin{equation}\label{weak1}
\int_\Ome g(\Psi(x,t)) dx =\int_\Ome g(\nab \psi(X(x,t),t)) dx 
=\int_\Ome g(\nab\psi(x,t)) dx \qquad\forall\,g\in C_c(\bR^3).
\end{equation}
Using a change of variable $y=\nab \psi(x,t)$ on the right and 
the definition of $\alpha(y,t)dy$ on the left we get
\begin{align*}
\int_{\bR^3} g(y) \alpha(y,t) dy 
=\int_{\bR^3} g(y) \det(D^2 \psi^*(y,t)) dy \qquad\forall\,g\in C_c(\bR^3), 
\end{align*}
where $\psi^*$ denotes the Legendre transform of $\psi$, that is,
\begin{align}\label{psistardefinition}
\psi^*(y,t)=\sup_{x\in \Omega}\bigl(x\cdot y-\psi(x,t)\bigr).
\end{align}
Hence
\[
\alpha(y,t) = \det(D^2 \psi^*(y,t)), 
\]
which yields \eqref{intro1}.  

For convex function $\psi$, by a property of the Legendre transform we 
have $\nab\psi^*(y,t)=x\in \Ome$. Hence $\nab\psi^*\subset \Ome$, therefore,
\eqref{intro3a} holds.

Finally, for any $w\in C_c^\infty([-1,T];\mathbb{R}^3)$, it follows
from integration by parts and \eqref{transform3} that
\begin{align*}
&-\int_{\Ome} w(\Psi(x,0),0)\, dx
=\int_0^T\int_{\Ome} \frac{d w(\Psi(x,t),t)}{dt}\, dxdt\\ 
&\qquad\qquad
=\int_0^T\int_{\Ome} 
\Bigl\{ \nabla w(\Psi(x,t),t)\cdot \frac{\p \Psi(x,t)}{\p t} 
+\frac{\p w(\Psi(x,t),t)}{\p t} \Bigr\} \, dxdt\\
&\qquad\qquad
=\int_0^T\int_{\Ome} \Bigl\{ \nabla w(\Psi(x,t),t) 
\cdot f(\Psi(x,t)-X(x,t))^\bot + \frac{\p w(\Psi(x,t),t)}{\p t}\Bigr\}\, dx dt.
\end{align*}
Making a change of variable $y=\nab \psi(x,t)$ and using the definition 
of $\alpha(y,t) dy$ we get
\begin{align}\label{weak2}
\int_0^T\int_{\bR^3} \Bigl\{ \frac{\p w(y,t)}{\p t} 
+f \bv(y,t)\cdot \nabla w(y,t)\Bigr\} \alpha(y,t)\, dy dt
+\int_{\bR^3} w(y,0)\alpha(y,0)\, dy=0,
\end{align}
where $\bv$ is as in \eqref{vdef}. Hence,
\[
\frac{\p \alpha(y,t)}{\p t} +f \Div( \bv(y,t) \alpha(y,t)) = 0, 
\]
which gives \eqref{intro3} as $f=1$ is assumed in Section \ref{sec-1}.

We remark that \eqref{weak1} and \eqref{weak2} are weak formulations of
\eqref{intro1} and \eqref{intro2}, respectively.  We also cite
the following existence and regularity results for \eqref{intro1}-\eqref{intro3}
and refer the reader to \cite{Benamou_98} for their proofs.
\begin{thm}\label{Benamouthm}
Let $\Ome_0,\Omega\subset \bR^3$ be two bounded Lipschitz domain. 
Suppose further that $\alpha_0\in L^p(\bR^3)$ with $\alpha_0\ge 0$, 
$\text{supp}(\alpha_0)\subset\Ome_0$, and 
$\int_{\Ome_0} \alpha_0(x)dx=|\Omega|$. Then for any $T>0$, 
$p>1$,  \eqref{intro1}-\eqref{intro3} has a weak solution 
$(\psi^*,\alpha)$ in the sense of \eqref{weak1} and \eqref{weak2}. 
Furthermore, there exists an $R>0$ such that 
$\mbox{supp}(\alpha(x,t))\subset B_R(0)$ for all $t\in [0,T]$ and 
\begin{alignat*}{2}
&\alpha\in L^\infty([0,T]; L^p(B_R(0))) &&\quad \text{nonnegative},\\
&\psi\in L^\infty([0,T]; W^{1,\infty}(\Omega)) &&\quad \text{convex in physical space},\\
&\psi^*\in L^\infty([0,T]; W^{1,\infty}(\bR^3) &&\quad \text{convex in dual space}.
\end{alignat*}
\end{thm}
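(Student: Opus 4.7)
The plan is to follow the polar factorization approach of Brenier and Benamou that was outlined informally in the derivation of Section~\ref{sec-2}, and to make each step rigorous via a time-discretization argument. First I would use the given initial data $\alpha_0$ together with Brenier's polar factorization theorem to construct an initial convex geopotential $\psi(\cdot,0):\Ome\to\bR$ whose gradient pushes forward the Lebesgue measure on $\Ome$ to the measure $\alpha_0\,dy$; convexity is forced by the Cullen--Norbury--Purser energy-minimization principle recalled above, and the compatibility condition $\int_{\Ome_0}\alpha_0\,dx=|\Ome|$ ensures that the two measures have equal total mass so the push-forward is well defined.

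Next I would semi-discretize \eqref{transform3} in time by setting $\Psi^{n+1}(x)=\Psi^n(x)+f\Delta t\,J(\Psi^n(x)-X^n(x))$ with $X^0=\text{id}$, and applying Brenier's polar factorization at every step to write $\Psi^n=\nab\psi^n\circ X^n$ with $\psi^n$ convex and $X^n$ Lebesgue measure preserving. The associated dual density $\alpha^n$ is then defined as the push-forward of $\chi_\Ome\,dx$ by $\nab\psi^n$. The key a priori bound comes from the antisymmetry of $J$: testing the discrete dynamics against $\Psi^n-X^n$ and using $Jw\cdot w=0$ yields conservation up to $O(\Delta t)$ of the geostrophic energy $\tfrac12\int_\Ome|\Psi^n(x)-x|^2\,dx$. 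Combined with boundedness of $\Ome$, this controls $\nab\psi^n$ in $L^\infty$ uniformly in $n$ and therefore gives a uniform $W^{1,\infty}$ bound on $\psi^n$.

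The third step is to pass to the limit $\Delta t\to 0$. From the uniform $W^{1,\infty}$ bound on $\psi^n$, together with Lipschitz-in-time estimates obtained from the discrete version of \eqref{transform3}, I would extract along a subsequence a limit $\psi(\cdot,t)$ that is convex in the physical variable and Lipschitz in $t$ by Arzel\`a--Ascoli; its Legendre transform $\psi^*$ inherits the dual-space $W^{1,\infty}$ regularity automatically, and the duality identity $\nab\psi^*(y)=x$ whenever $y=\nab\psi(x)$ gives the constraint $\nab\psi^*\subset\Ome$ in \eqref{intro3a}. Stability of polar factorization under weak-$*$ convergence of measures (a theorem of Brenier) then transfers the push-forward identity $(\nab\psi^n)_\#(\chi_\Ome\,dx)=\alpha^n\,dy$ to the limit, yielding $\alpha(\cdot,t)=\det(D^2\psi^*(\cdot,t))$ interpreted weakly as in \eqref{weak1}; analogous testing against $w\in C_c^\infty$ recovers the weak transport equation \eqref{weak2}.

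Finally, the uniform support bound $\text{supp}(\alpha(\cdot,t))\subset B_R(0)$ follows from the fact that, since $\nab\psi^*\subset\Ome$ with $\Ome$ bounded, the advection velocity $\bv=(\nab\psi^*-x)^\bot$ grows at most linearly in $|x|$, so a Gronwall argument applied to the characteristic ODEs $\dot X=\bv(X,t)$ confines the propagated support to a time-uniform ball. The $L^\infty([0,T];L^p(B_R(0)))$ estimate on $\alpha$ then comes from the push-forward representation together with the $L^p$ bound on $\alpha_0$, using that $\Div\bv=0$ so the characteristic flow is volume preserving. I expect the main obstacle to be the third step: passing to the limit in the fully nonlinear identity $\det(D^2\psi^*)=\alpha$ without any classical regularity on $\psi^*$ is delicate, and is precisely why the equation must be interpreted in the weak push-forward sense \eqref{weak1} rather than pointwise.
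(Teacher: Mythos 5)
The paper does not prove Theorem~\ref{Benamouthm}: immediately before the statement the authors write that they ``cite the following existence and regularity results \ldots\ and refer the reader to \cite{Benamou_98} for their proofs.'' There is therefore no in-paper proof to compare against; your sketch is being measured against the argument of Benamou and Brenier \cite{Benamou_98}, which the paper defers to. Against that benchmark, your outline is a faithful high-level reconstruction of their strategy --- time semi-discretization of \eqref{transform3}, polar factorization $\Psi^n=\nab\psi^n\circ X^n$ at each step, uniform a priori bounds, and passage to the limit in the weak push-forward identities \eqref{weak1}--\eqref{weak2} --- and your closing observation that the Monge--Amp\`ere identity can only be inherited in the weak sense of \eqref{weak1} is the correct caveat.

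Where the sketch is imprecise is in the a priori estimate step, and the imprecision affects the logical order. The claimed near-conservation of $\tfrac12\int_\Ome|\Psi^n(x)-x|^2\,dx$ ``by testing against $\Psi^n-X^n$ and using $Jw\cdot w=0$'' is internally inconsistent: the quantity you want involves the fixed Lagrangian label $x$, the quantity you test with involves $X^n(x)$, and $X^n$ is re-generated by the polar factorization at every step, so antisymmetry of $J$ alone does not close a discrete Gronwall bound for that integral. Note also that the $W^{1,\infty}$ bound on $\psi^*$ in dual space is automatic from polar factorization and boundedness of $\Ome$ (one has $\nab(\psi^n)^*\subset\Ome$ by construction), while the Lipschitz bound on $\psi^n$ over $\Ome$ is controlled by the diameter of the support of $\alpha^n$ --- which is exactly the Gronwall-in-time estimate you relegate to your final paragraph. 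That support estimate should precede, not follow, the claimed $W^{1,\infty}$ bound on $\psi$, since it is what makes the latter uniform in $n$ and in $t\in[0,T]$.
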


\begin{remark}
(a). The above compact support result for $\alpha$ justifies our approach
of solving the original infinite domain problem on a truncated 
computational domain $U$, in particular, if $U$ is chosen large enough
so that $B_R(0)\subset U$.

(b). Since $\alpha$ and $\psi^*$ are not physical variables, one needs 
to recover the physical variables $\bu$ and $p$ from $\alpha$ and $\psi^*$.
This can be done by the following procedure. First, one constructs 
the geopotential $\psi$ from its Legendre transform $\psi^*$. Numerically,
this can be done by fast inverse Legendre transform algorithms. Second, 
one recovers the pressure field $p$ from the geopotential $\psi$
using \eqref{psidefinition}.  Third, one obtains the geostrophic 
wind $\bu_g$ and the full velocity field $\bu$ from the pressure field $p$ 
using \eqref{wind}.

(c). Recently, Loeper \cite{Loeper_06} generalized the above results 
to the case where $\alpha$ is a global weak probability measure solution of the 
semigeostrophic equations.

(d). As a comparison, we recall that two-dimensional incompressible Euler 
equations (in the vorticity-stream function formulation) has the form
\begin{alignat*}{2}
\Delta \phi & = \omega &&\quad\mbox{in } \Ome\times(0,T], \\
\frac{\p \omega}{\p t} + \Div(\bu \omega) 
&= 0 &&\quad\mbox{in } \Ome\times(0,T], \\
\bu &= (\nab \phi)^\bot. &&
\end{alignat*}
Clearly, the main difference is that $\phi$-equation above is a linear 
equation while $\psi^*$ in \eqref{intro2} is a fully nonlinear equation.
\end{remark}

We conclude this section by remarking that in the case that
the gravity is omitted, then the flow becomes two-dimensional. 
Repeating the derivation of this section and dropping the third
component of all vectors, we then obtained a $2$-d semigeostrophic 
flow model which has exactly the same form as \eqref{intro1}--\eqref{vdef}
except that the definition of the operator $(\cdot)^\perp$ becomes
$\bw^\perp:=(w_2,-w_1)$ for $\bw=(w_1,w_2)$, and $\bv$ in \eqref{vdef}
is replaced by 
\[
\bv=(\psi^*_{x_2}-x_2,x_1-\psi^*_{x_1}).
\] 
Similarly, $\bv^\vepsi$ in \eqref{vepsdef} should be replaced by
\[ 
\bv^\eps=(\psi^\eps_{x_2}-x_2,x_1-\psi^\eps_{x_1}).
\]
In the remaining of this paper we shall consider numerical approximations of
both $2$-d and $3$-d models.

\section{Formulation of the numerical method}\label{sec-3}

\subsection{Formulation of the vanishing moment approximation}\label{sec-3.1}
As pointed out in Section \ref{sec-1}, the primary difficulty for 
analyzing and numerically approximating the semigeostrophic 
equations \eqref{intro1}--\eqref{vdef} is caused by the strong 
nonlinearity and non-uniqueness of the $\psi^*$-equation (i.e., Monge-Amp\'ere 
equation. cf. \cite{Aleksandrov61,Gilbarg_Trudinger01}). 
The strong nonlinearity makes the equation non-variational, so any 
Galerkin type numerical methods is not directly applicable to the 
fully nonlinear equation. Non-uniqueness is difficult to deal at the discrete
level because no effective selection criterion is known 
in the literature which guarantees picking up the physical solution
(i.e., the convex solution). Because of the above difficulties,
very little progress was made in the past on developing numerical
methods for the Monge-Amp\'ere equation and other fully nonlinear
second order PDEs (cf. \cite{Dean_Glowinski06b,Oberman07,Oliker_Prussner88}).   

Very recently, we have developed a new approach, called {\em the vanishing 
moment method}, for solving the Monge-Amp\'ere equation and other fully 
nonlinear second order PDEs (cf. \cite{Feng1,Feng2,Feng3,Feng4,Neilan_08,Neilan}).
Our basic idea is to approximate a fully nonlinear second order PDE
by a singularly perturbed quasilinear fourth order PDE.
In the case of the Monge-Amp\'ere equation, we approximate the fully nonlinear
second order equation 
\begin{align}\label{fixedalpha}
\det(D^2 w)=\varphi
\end{align} 
by the following fourth order quasilinear PDE 
\begin{align*}
-\eps\Delta^2 w^\eps +\det(D^2 w^\eps)=\varphi\spa (\eps>0) 
\end{align*}
accompanied by appropriate boundary conditions.
Numerics of \cite{Feng2, Feng3, Feng4,Neilan_08} show that for
fixed $\varphi \ge 0$, $w^\eps$ converges to the unique convex
solution $w$ of \eqref{fixedalpha} as $\eps\to 0^+$. Rigorous proof
of the convergence in some special cases was carried out in \cite{Feng1}.  
Upon establishing the convergence of the vanishing moment method,
one can use various well-established numerical methods (such as
finite element, finite difference, spectral and discontinuous
Galerkin methods) to solve the perturbed quasilinear fourth order PDE. 
Remarkably, our experiences so far suggest that the vanishing moment 
method always converges to the physical solution. The success motivates
us to apply the vanishing moment methodology to the semigeostrophic 
model \eqref{intro1}--\eqref{vdef},  which leads us to studying
problem \eqref{intro4}--\eqref{intro9}.

\begin{remark}
Since a perturbation term is introduced in \eqref{intro4}, 
it is also natural to introduce a ``viscosity" term 
$-\vepsi\Delta \alpha$ on the left-hand side of \eqref{intro5}.  
We believe this should be another viable strategy and will 
further explore the idea and compare the anticipated new result 
with that of this paper. 
\end{remark}

Since \eqref{intro4}--\eqref{intro5} is a quasilinear system,
we can define weak solutions for problem \eqref{intro4}--\eqref{intro9}
in the usual way using integration by parts.

\begin{definition}\label{def3.1}
A pair of functions $(\psi^\vepsi,\alpha^\vepsi)\in L^\infty((0,T);H^2(U))
\times L^2((0,T);H^1(U))\cap H^1((0,T);L^2(U))$ is called a weak solution 
to \eqref{intro4}--\eqref{intro9} if they satisfy 
the following integral identities for almost every $t\in (0,T)$:
\begin{alignat}{2}\label{weakform1}
-\vepsi \bigl( \Del \psi^\vepsi, \Del v\bigr) +\bigl(\det(D^2\psi^\eps), v\bigr)
&=(\alpha^\vepsi, v) + \langle \vepsi^2, v\rangle
&&\qquad\forall v\in H^2(U),\\
\Bigl(\frac{\p \alpha^\vepsi}{\p t}, w\Bigr) 
+\bigl( \bv^\vepsi \cdot \nab \alpha^\vepsi, w\bigr) &=0 &&\qquad\forall w\in H^1(U),
\label{weak_form2}\\
\bigl(\alpha^\eps(\cdot,0),\chi \bigr) &=\bigl(\alpha_0,\chi \bigr) 
&&\qquad\forall \chi\in L^2(U), \label{weak_form3}\\
(\psi^\eps,1)&=0,  &&\qquad  \label{weak_form4}
\end{alignat}
here $\bv^\eps
=(\psi^\eps_{x_2}-x_2,x_1-\psi^\eps_{x_1},0)$ when $d=3$ and 
$\bv^\eps=(\psi^\eps_{x_2}-x_2,x_1-\psi^\eps_{x_1})$ when $d=2$, 
and we have used the fact that $\Div \bv^\vepsi=0$.
\end{definition}

For the continuation of the paper, we assume that there exists a
unique solution to \eqref{intro4}--\eqref{intro9} such that
$\psi^\eps(x,t)$ is convex, $\alpha^\eps(x,t)\geq 0$, and supp
$\alpha^\eps(x,t)\subset B_R(0)\subset U$ for all $t\in [0,T]$. We
also assume $\psi^\eps\in L^2((0,T);H^s(U))$ ($s\ge 3$),
$\alpha^\eps\in L^2((0,T);H^p(U))$ ($p\ge 2$), and that the
following bounds hold (cf. \cite{Feng1}) for almost all $t\in [0,T]$
\begin{align}\label{bounds}
&\|\psi^\eps(t)\|_{H^j}=O(\eps^{\frac{1-j}{2}})\,\, (j=1,2,3),\qquad
\|\Phi^\eps(t)\|_{L^\infty}=O(\eps^{-1}),\\
&\|\psi^\eps(t)\|_{W^{j,\infty}}=O(\eps^{1-j})\,\, (j=1,2),\qquad 
\|\alpha^\eps(t)\|_{W^{1,\infty}}=O(\eps^{-1}), \label{bounds1}
\end{align}
where $\Phi^\eps=\text{cof}(D^2\psi^\eps)$ denotes the cofactor 
matrix of $D^2\psi^\eps$.

As expected, the proof of the above assumptions is extensive
and not easy. We do not intend to give a full proof in this paper. 
However, in the following we shall present a proof for a 
key assertion, that is, $\alpha^\vepsi(x,t)\geq 0$ in $U\times [0,T]$
provided that $\alpha_0(x)\geq 0$ in $\bR^d (\, d=2,3)$. Clearly, this assertion is 
important to ensure that $\psi^\vepsi(\cdot,t)$ is a convex function
for all $t\in [0,T]$.

\begin{proposition}\label{prop1}
Suppose $(\alpha^\vepsi,\psi^\vepsi)$ is a regular solution 
of \eqref{intro4}--\eqref{intro9}. Assume $\alpha_0(x)\geq 0$ in $\bR^d (\, d=2,3)$,
then $\alpha^\vepsi(x,t)\geq 0$ in $U\times [0,T]$. 
\end{proposition}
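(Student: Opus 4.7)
The plan is to derive an energy estimate on the negative part of $\alpha^\vepsi$ and deduce that it vanishes identically. Define $\alpha^-(x,t) := \min\{\alpha^\vepsi(x,t), 0\}$, so that the assertion $\alpha^\vepsi \ge 0$ is equivalent to $\alpha^- \equiv 0$ on $U \times [0,T]$. Since the hypothesis $\alpha_0 \ge 0$ gives $\alpha^-(\cdot,0) \equiv 0$, it suffices to show that $t \mapsto \|\alpha^-(\cdot,t)\|_{L^2(U)}^2$ is non-increasing.

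First, I would exploit $\Div \bv^\vepsi = 0$ (as recorded in Definition \ref{def3.1}) to rewrite \eqref{intro5} in non-conservative form,
$$\frac{\p \alpha^\vepsi}{\p t} + \bv^\vepsi \cdot \nab \alpha^\vepsi = 0,$$
multiply through by $2\alpha^-$, and invoke the standard chain-rule identities $2\alpha^- \p_t \alpha^\vepsi = \p_t (\alpha^-)^2$ and $2\alpha^- \nab\alpha^\vepsi = \nab(\alpha^-)^2$, which hold pointwise almost everywhere for regular $\alpha^\vepsi$ composed with the Lipschitz truncation $s\mapsto s^-$. Recombining with $\Div\bv^\vepsi=0$ yields the conservation form
$$\p_t (\alpha^-)^2 + \Div\bigl(\bv^\vepsi (\alpha^-)^2\bigr) = 0.$$
Integrating over $U$ and applying the divergence theorem produces
$$\frac{d}{dt}\int_U (\alpha^-)^2\,dx = -\int_{\p U} (\bv^\vepsi \cdot \nu)\,(\alpha^-)^2 \, dS.$$

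The crucial step is to argue that this boundary integral vanishes. Here I would appeal to the standing support assumption $\mbox{supp}\,\alpha^\vepsi(\cdot,t) \subset B_R(0) \subset U$ for every $t \in [0,T]$, which forces $\alpha^-$ (and hence $(\alpha^-)^2$) to be identically zero in an open neighborhood of $\p U$. The right-hand side then vanishes, so $\frac{d}{dt}\|\alpha^-(\cdot,t)\|_{L^2(U)}^2 = 0$, and combined with $\alpha^-(\cdot,0)\equiv 0$ we conclude $\alpha^-\equiv 0$, proving $\alpha^\vepsi \ge 0$ throughout $U \times [0,T]$. The principal obstacle is precisely this boundary term: the boundary conditions \eqref{intro7}--\eqref{intro8} on $\psi^\vepsi$ control only $\nab\psi^\vepsi\cdot \nu$, whereas $\bv^\vepsi\cdot\nu$ essentially involves the tangential derivative of $\psi^\vepsi$ and therefore need not vanish on $\p U$. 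Without the compact-support hypothesis one would have to analyze the inflow portion of $\p U$ separately or impose an additional boundary condition on $\alpha^\vepsi$; with the hypothesis in force, the difficulty is bypassed.
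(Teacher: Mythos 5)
Your energy argument is correct and proves the proposition, but it takes a genuinely different route from the paper. The paper's proof is the classical method of characteristics: for fixed $(x,t)$ one integrates the ODE $\frac{d}{ds}X^\vepsi(x,t;s)=\bv^\vepsi(X^\vepsi(x,t;s),s)$ backward in time, observes that $\alpha^\vepsi$ is constant along this characteristic (since $\Div\bv^\vepsi=0$ lets one pass to the non-conservative form), and concludes $\alpha^\vepsi(x,t)=\alpha_0(X^\vepsi(x,t;0))\ge 0$ directly from $\alpha_0\ge 0$. That pointwise argument is shorter and gives more information — it yields the exact representation formula, not merely a sign. Your approach, truncating to $\alpha^-=\min\{\alpha^\vepsi,0\}$, multiplying by $2\alpha^-$, and pushing the derivative inside via the Stampacchia-type chain rule, is an $L^2$ argument that only gives $\alpha^-\equiv 0$, but it has its own virtue: it does not require solving for characteristics and would survive the addition of a diffusion term $-\vepsi\Delta\alpha^\vepsi$ (a modification the paper itself floats in a remark), where the pure transport representation fails.

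One point worth emphasizing: both proofs lean, implicitly or explicitly, on a hypothesis about boundary behavior. You identified this clearly — your boundary integral $\int_{\p U}(\bv^\vepsi\cdot\nu)(\alpha^-)^2\,dS$ does not vanish from the boundary conditions on $\psi^\vepsi$, since $\bv^\vepsi\cdot\nu$ involves tangential derivatives, and you invoke the standing compact-support assumption $\operatorname{supp}\alpha^\vepsi(\cdot,t)\subset B_R(0)\subset U$ to kill it. The paper's characteristics proof has the mirror-image issue (left unremarked): if a backward characteristic through $(x,t)$ exits $U$ before $s=0$, the representation $\alpha^\vepsi(x,t)=\alpha_0(X^\vepsi(x,t;0))$ is not justified without either the support assumption or an inflow boundary condition on $\alpha^\vepsi$. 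So you are not worse off than the paper on this score, and flagging the issue is a plus. In summary, your proof is valid under the same standing hypotheses as the paper's, uses a different (energy rather than Lagrangian) mechanism, and correctly isolates the one nontrivial point.
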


\begin{proof}
For any fixed $(x,t)\in U\times (0,T]$, let $X^\vepsi(x,t;s)$ denote the 
characteristic curve passing through $(x,t)$ for the transport 
equation \eqref{intro5}, that is.
\begin{align*}
\frac{d X^\vepsi(x,t;s)}{ds} &=\bv^\vepsi(X^\vepsi(x,t;s),s) 
\qquad\quad\forall s\neq t,\\ 
X(x,t;t) &=x.
\end{align*}
Then the solution $\alpha^\vepsi$ at $(x,t)$ can be written as  
\[
\alpha^\vepsi(x,t)=\alpha_0(X^\vepsi(x,t;0)).
\]
Hence, $\alpha^\vepsi(x,t)\geq 0$ for all $(x,t)\in U\times [0,T]$. 
The proof is complete.
\end{proof}

\subsection{Formulation of modified characteristic finite element method}\label{sec-3.2}

Let $\mathcal{T}_h$ be a quasiuniform triangulation or rectangular partition
of $U$ with mesh size $h\in (0,1)$ and $V^h\subset H^2(U)$ denote a 
conforming finite element space (such as Argyris, Bell, Bogner--Fox--Schmit, and
Hsieh--Clough--Tocher finite element spaces \cite{Ciarlet78} when $d=2$)
consisting of piecewise polynomial functions of degree $r \,(\geq 4)$ such that 
for any $v\in H^s(U)\ (s\ge 3)$
\begin{align}
\inf_{v_h\in V^h} \|v-v_h\|_{H^j} \leq h^{\ell-j} \|v\|_{H^s},\quad
j=0,1,2;\, \ell=\min\{r+1,s\}.
\end{align}
Also let $W^h$ be a finite dimensional subspace of $H^1(U))$ consisting
of piecewise polynomials of degree $k\, (\ge 1)$ associated with the mesh $\mathcal{T}_h$.  

Set 
\begin{alignat}{2}\label{spacedef1}
&V^h_0:=\Bigl\{v_h\in V^h;\ \normd{v_h}\Bigl|_{\partial U}=0 \Bigr\},
\qquad && V^h_1:=\{v_h\in V^h_0;\, (v_h,1)=0\},\\
&W^h_0:=\{w_h\in W^h;\ w_h\big|_{\partial U}=0\}, \qquad &&
\tau:=\frac{(1, \bv^\vepsi)}{\sqrt{1+|\bv^\vepsi|^2}} \in \bR^{d+1}. \label{spacedef2}
\end{alignat}
It is easy to check that
\[
\frac{\p }{\p \tau}:=\tau\cdot \Bigl(\frac{\p }{\p t},\nab \Bigr)
= \frac{1}{\sqrt{1+|\bv^\vepsi|^2}}\Bigl(\frac{\p}{\p t} +\bv^\eps\cdot \nab\Bigr).
\]
Hence, from \eqref{intro5} we have
\begin{equation}\label{semgeo2a}
\frac{\p \alpha^\eps}{\p \tau}=\frac{1}{\sqrt{1+|\bv^\vepsi|^2}}
\Bigl(\frac{\p \alpha^\eps}{\p t} +\bv^\eps\cdot \nab \alpha^\eps \Bigr)=0.
\end{equation}
Here we have used the fact that $\Div \bv^\eps=0$.

For a fixed positive integer $M$, let $\Delta t:=\frac{T}{M}$ and
$t_m:=m\Delta t$ for $m=0,1,2,\cdots,M$.  For any $x\in U$, let 
$\bar{x}:=x-\bv^\eps(x,t) \Delta t$. It follows from
the Taylor's formula that (cf. \cite{Douglas_84, Douglas_Russell_82})
\begin{equation}\label{taylor}
\frac{\p \alpha^\vepsi(x, t_m)}{\p \tau}
=\frac{\alpha^\eps(x,t_m)-\alpha^\eps(\bar{x},t_{m-1})}{\Delta t} + O(\Delta t)
\qquad \mbox{for } m=1,2,\cdots,M.
\end{equation}

Borrowing the ideas of \cite{Douglas_84, Douglas_Russell_82}, we 
propose the following modified characteristic finite element
method for problem \eqref{intro4}--\eqref{intro9}:

\medskip
{\bf Algorithm 1:}

{\em Step 1}:  Let $\alpha_h^0$ be the finite element 
interpolation or the elliptic projection of $\alpha_0$.

{\em Step 2}:  For $m=0,1,2,\hdots M$, find $(\psi^m_h,\alpha^{m+1}_h)\in
V_1^h\times W^h_0$ such that
\begin{alignat}{2}\label{geomethod1}
-\eps(\Delta \psi_h^{m},\Delta v_h)+(\det(D^2\psi_h^{m}),v_h)
&=(\alpha_h^{m},v_h)+\langle \eps^2,v_h\rangle  &&\qquad\forall v_h\in V^h_0,\\
(\psi_h^m,1)&=0, &&\qquad \label{geomethod1a} \\ 
\bigl(\alpha_h^{m+1}-\oalpha_h^{m},w_h\bigr) &=0
&&\qquad\forall w_h\in W^h_0, \label{geomethod2}  
\end{alignat} 
where
\[
\oalpha_h^{m}:=\alpha_h^{m}(\bar{x}_h),\qquad
\bar{x}_h:=x-\bv^{m}_h\Del t,\qquad
\bv^m_h:= (\nab \psi^{m}_h-x)^\bot.
\]

In the case that $W^h$ is the continuous linear finite element space (i.e., $k=1$), 
we have the following lemma.

\begin{lemma}\label{lem3.1}
Let $k=1$ in the definition of $W^h$, suppose that $\alpha_h^0 \geq 0$ in $\bR^d 
(\, d=2,3)$, then the solution of Algorithm 1 satisfies $\alpha_h^m \geq 0$ 
in $U$ for all $m\geq 1$.
\end{lemma}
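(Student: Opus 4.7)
The plan is induction on $m$, with the base case $m=0$ given by hypothesis. For the inductive step, assume $\alpha_h^m\ge 0$ on $U$ and aim to show $\alpha_h^{m+1}\ge 0$. Because $k=1$, $\alpha_h^m$ is continuous piecewise linear, so pointwise nonnegativity is equivalent to nonnegativity of all its nodal values (the nodal basis being itself nonnegative and forming a partition of unity). Evaluating at the back-traced point $\bar{x}_h=x-\bv_h^m\Del t$ then yields $\oalpha_h^m(x)=\alpha_h^m(\bar{x}_h)\ge 0$ for every $x\in U$, where we use the support assumption $\text{supp}(\alpha_h^m)\subset B_R(0)\subset U$ together with zero extension of $\alpha_h^m$ in the event that $\bar{x}_h\notin U$.

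To conclude nonnegativity of $\alpha_h^{m+1}$ from the update identity $(\alpha_h^{m+1}-\oalpha_h^m,w_h)=0$ for all $w_h\in W^h_0$, I would read the inner product by mass-lumped nodal quadrature, which is standard and natural for continuous $P_1$ and consistent with the hypothesis $k=1$. The identity then decouples into the nodal equations $\alpha_h^{m+1}(x_i)=\oalpha_h^m(x_i)\ge 0$ at each interior node $x_i$, supplemented by $\alpha_h^{m+1}(x_i)=0$ at each boundary node. All nodal values being nonnegative, the piecewise linear $\alpha_h^{m+1}$ is itself nonnegative throughout $U$, closing the induction.

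The main obstacle is precisely this last step. For the exact (non-lumped) $L^2$ projection onto continuous $P_1$, positivity is not generally preserved: the inverse of the $P_1$ mass matrix has negative off-diagonal entries, so nonnegative data can be projected to a function with negative nodal values. This is exactly why the lemma is restricted to $k=1$, where mass lumping converts the mass matrix into a positive diagonal and the update step reduces to nodal interpolation of the traced-back values; for higher-order $W^h$ the stated nonnegativity would fail without additional structure on $\cT_h$ (e.g., $M$-matrix-type geometric conditions on the triangulation).
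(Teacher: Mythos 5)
Your induction has the same skeleton as the paper's proof: reduce the claim to nodal values, use the nonnegativity of the $P_1$ nodal basis, trace back, and close by induction. The substantive point---which you have correctly isolated---is the step from \eqref{geomethod2} to the nodal identity $\alpha_h^{m+1}(P_j)=\oalpha_h^m(P_j)$. The paper asserts that \eqref{geomethod2} ``immediately implies'' this, but with the exact $L^2$ inner product it does not: \eqref{geomethod2} then defines $\alpha_h^{m+1}$ as the $L^2$-projection of $\oalpha_h^m$ onto $W^h_0$, the consistent $P_1$ mass matrix is not diagonal, and its inverse has negative off-diagonal entries, so a nonnegative $\oalpha_h^m$ can project to a piecewise linear function with negative nodal values. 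Your reading of the update through mass-lumped nodal quadrature is exactly what repairs this: it diagonalizes the mass matrix and reduces \eqref{geomethod2} to the nodal interpolation $\alpha_h^{m+1}(P_j)=\oalpha_h^m(P_j)$ at interior nodes (with $\alpha_h^{m+1}(P_j)=0$ at boundary nodes), after which the induction closes exactly as both you and the paper state. So your proposal is not an alternative route; it supplies the quadrature convention that the paper's argument tacitly requires. You are also more careful than the paper about the possibility $\bar{x}_h\notin U$, handling it by zero extension of $\alpha_h^m$, which is needed to make sense of $\alpha_h^m(\overline{P}_j)$ when a back-traced node leaves $U$. In short: correct proof, same structure as the paper, with the key assumption made explicit where the paper elides it.
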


\begin{proof}
In the case $k=1$, \eqref{geomethod2} immediately implies that 
\[
\alpha_h^{m+1}(P_j)=\alpha_h^m(\overline{P}_j),
\]
where $\{P_j\}$ denote the nodal points of the mesh $\cT_h$ and 
$\overline{P}_j:=P_j-\bv^{m}_h\Del t$.  Suppose that $\alpha_h^m(P_j)\geq 0$
for all $j$. Since the basis functions of the linear element are nonnegative,
then we have $\alpha_h^m(\overline{P}_j)\geq 0$ for all $j$. Hence, 
$\alpha_h^{m+1}(P_j)\geq 0$ for all $j$. Therefore, the assertion follows from 
the induction argument.
\end{proof}

\begin{remark} 
The positivity of $\alpha_h^m$ for all $m\geq 1$ gives 
 hope to verify the convexity of $\psi_h^m$, which remains
as an open problem (cf. \cite{Feng2, Feng3, Feng4}).  For high order
finite elements (i.e., $k\geq 2$), $\alpha_h^m$ might take negative values
for some $m>0$ although we shall show later that the deviation from zero
must be very small.
\end{remark}

Let $(\psi^\eps,\alpha^\eps)$ be the solution of
\eqref{intro4}--\eqref{intro9} and $(\psi^m_h,\alpha^m_h)$ be
the solution of \eqref{geomethod1}-\eqref{geomethod2}.  In the
subsequent sections we prove existence and uniqueness for
$(\psi^m_h,\alpha^m_h)$ and provide optimal order error estimates for
$\psi^\eps(t_m)-\psi^m_h$ and $\alpha^\eps(t_m)-\alpha^m_h$ under
certain mesh and time stepping constraints. To this end, we first
study \eqref{geomethod1} independently, which motivates us
to analyze finite element approximations of the Monge-Amp\'ere 
equation with small perturbations of the data. Such an analysis
enables us to bound the error $\psi^\eps(t_m)-\psi^m_h$ in terms of 
of the error $\alpha^\eps(t_m)-\alpha^{m}_h$.  We use similar techniques 
to those developed in \cite{Feng4} to carry out the analysis.  
With this result in hand, we use an inductive argument in 
Section \ref{sec-5} to get the desired error estimates 
for both $\psi^\eps(t_m)-\psi^m_h$ and $\alpha^\eps(t_m)-\alpha^{m}_h$.

\section{Finite element approximations of the Monge-Amp\'ere equation 
with small perturbations}\label{sec-4}

As mentioned above, analyzing the error $\psi^\eps(t_m)-\psi^m_h$
motivates us to consider finite element approximations 
of the following auxiliary problem: for $\vepsi>0$, 
\begin{alignat}{2}\label{uvarphi1}
-\eps\Delta^2 u^\varphi+\det(D^2u^\varphi)&=\varphi\ (>0)\spa &&\text{in }U,\\
\normd{u^\varphi}&=0\spa &&\text{on }\partial U, \label{uvarphi2}\\
\normd{\Delta u^\varphi}&=\eps\spa &&\text{on }\partial U, \label{uvarphi3}\\
(u^\varphi,1)&=0, &&  \label{uvarphi4}
\end{alignat}
whose weak formulation is defined as seeking $u^\varphi\in H^2(\Ome)$ such that
\begin{align}\label{weakMA1}
-\eps \bigl(\Del u^\varphi, \Del v\bigr) + \bigl( \det(D^2u^\varphi), v\bigr)
&=\bigl( \varphi, v\bigr) +\langle \eps^2, v\rangle \qquad\forall v\in H^2(U),\\
(u^\varphi,1)&=0. &&  \label{weakMA2}
\end{align} 

We note that the finite element approximation of a similar 
Monge-Amp\'ere problem was constructed and
analyzed in \cite{Feng4}, where the Dirichlet boundary condition was
considered and the right-hand side function $\varphi$ is the 
same in the finite element scheme as in the PDE problem. 
In this section, we shall study the finite element approximation
of \eqref{uvarphi1}--\eqref{uvarphi4} in which $\varphi$ is 
replaced by $\tilde{\varphi}:=\varphi+\delta \varphi$, where 
$\delta \varphi$ is some small perturbation of $\varphi$.
Specifically, we analyze the following finite element approximation
of \eqref{uvarphi1}--\eqref{uvarphi4}: find $u^\varphi_h\in V^h_1$ 
such that
\begin{align}\label{uvarphih}
-\eps(\Del u^\varphi_h,\Del v_h)+(\det(D^2u^\varphi_h),v_h)
&=(\tilde{\varphi},v_h)+\left\langle
\eps^2,v_h\right\rangle  \spa \forall v_h\in
V^h_0.\end{align}

As expected, we shall adapt the same ideas and techniques as those 
of \cite{Feng4} to analyze the above scheme. However, we shall 
omit some details if they are same as those of \cite{Feng4} but
highlight the differences if they are significant, in particular,
we shall trace how the error constants depend on $\vepsi$ and 
$\delta\varphi$.  Also, since the analysis in $2$-d and $3$-d are essentially
the same, we shall only present the detailed analysis of the 
three dimensional case and make comments about the two 
dimensional case when there is a meaningful difference. 

To analyze scheme \eqref{uvarphih}, we first recall that (cf. \cite{Feng4})
the associated bilinear form of the linearization of the operator 
$M^\eps(u^\varphi):=-\eps\Delta ^2 u^\varphi+\det(D^2u^\varphi)$ at
the solution $u^\varphi$ is given by
\begin{align}\label{Bdef}
B[v,w]:=\eps(\Delta v,\Delta w)+(\Phi^\varphi \nabla v,\nabla w),
\end{align}
where $\Phi^\varphi={\rm cof}(D^2u^\varphi)$ denotes the cofactor matrix of $D^2u^\varphi$.

Next, we define a linear operator $T^\varphi: V^h_1\to V^h_1$ such
that for $w_h\in V^h_1$, $T^\varphi(w_h)\in V^h_1$ is the solution
of following problem:
\begin{align}\label{Toperator}
B[w_h-T^\varphi(w_h),v_h] = \eps(\Delta w_h,\Delta v_h) &-
(\det(D^2 w_h),v_h) \\
&+(\tilde{\varphi},v_h) + \langle\eps^2, v_h\rangle 
\quad \forall v_h\in V^h_{0}. \nonumber
\end{align}

It follows from \cite[Theorem 3.5]{Feng4} that $T^\varphi$ is
well-defined.  Also, it is easy to see that any fixed point of
$T^\varphi$ is a solution to \eqref{uvarphih}.  We now show that if
$\|\delta \varphi\|_\lt$ is sufficiently small, then indeed,
$T^\varphi$ has a unique fixed point in a neighborhood of
$u^\varphi$.  To this end, we set
\[
\mathbb{B}_{h}(\rho):=\bigl\{v_h\in V^h_1;\ \|v_h-I_h u^\varphi\|_\htw
\le \rho \bigr\},
\]
where $I_h u^\varphi$ denotes the finite element interpolant of
$u^\varphi$ onto $V^h_1$.

Before we continue, we state a lemma concerning the divergence row
property of cofactor matrices. A short proof can be found in \cite{evans}.
\begin{lem}\label{cofactor}
Given a vector-valued function $\mathbf{w}=(w_1,w_2,\cdots,w_n):
U\rightarrow \mathbb{R}^n$. Assume $\mathbf{w}\in [C^2(U)]^n$.
Then the cofactor matrix $\text{\rm cof}(D\mathbf{w})$ of the
gradient matrix $D\mathbf{w}$ of $\mathbf{w}$ satisfies the
following row divergence-free property:
\begin{equation}\label{e3.1}
\Div (\text{\rm cof}(D\mathbf{w}))_i =\sum_{j=1}^n \partial_{x_j}
(\text{\rm cof}(D\mathbf{w}))_{ij} =0 \qquad\text{\rm for }
i=1,2,\cdots, n,
\end{equation}
where $(\text{\rm cof}(D\mathbf{w}))_i$ and $(\text{\rm
cof}(D\mathbf{w}))_{ij}$ denote respectively the $i$th row and the
$(i,j)$-entry of $\text{\rm cof}(D\mathbf{w})$.
\end{lem}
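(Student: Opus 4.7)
The plan is to prove the identity by writing each cofactor entry as a signed minor determinant, differentiating by exploiting the multilinear/alternating nature of the determinant, and then cancelling everything off by pairing the antisymmetry of the Levi-Civita symbol against the symmetry of mixed partial derivatives (which is available because $\mathbf{w}\in[C^2(U)]^n$).

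First, I would write the cofactor entries in the compact Levi-Civita form
\[
(\text{\rm cof}(D\mathbf{w}))_{ij}
=\frac{1}{(n-1)!}\sum_{\substack{i_2,\ldots,i_n\\ j_2,\ldots,j_n}}
\epsilon_{i\,i_2\cdots i_n}\,\epsilon_{j\,j_2\cdots j_n}\,
\prod_{k=2}^{n}\frac{\partial w_{i_k}}{\partial x_{j_k}},
\]
where $\epsilon$ denotes the fully antisymmetric Levi-Civita symbol. This is just the standard expansion of the $(i,j)$ minor of $D\mathbf{w}$, rearranged in a form that makes the symmetry analysis transparent.

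Next, I would apply $\partial_{x_j}$ and the product rule: differentiation brings down a second derivative $\partial^2 w_{i_k}/\partial x_j \partial x_{j_k}$ into one of the factors, while the other factors and both $\epsilon$ symbols are unaffected. Summing over $j\in\{1,\ldots,n\}$ then yields, for each fixed $k\in\{2,\ldots,n\}$, a term in which $j$ is contracted against $\epsilon_{j\,j_2\cdots j_n}$ on one side and against the second derivative $\partial^2 w_{i_k}/\partial x_j \partial x_{j_k}$ on the other. Here is the key observation: exchanging the dummy indices $j\leftrightarrow j_k$ flips the sign of $\epsilon_{j\,j_2\cdots j_k\cdots j_n}$, but by the Schwarz/Clairaut theorem ($\mathbf{w}\in[C^2(U)]^n$) leaves $\partial^2 w_{i_k}/\partial x_j\partial x_{j_k}$ invariant. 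Relabeling the summation indices then shows each such term equals its own negative, hence vanishes. Summing over $k$ yields \eqref{e3.1}.

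The main (only) obstacle is bookkeeping: in a detailed proof one has to be careful that after the product rule the remaining first-order factors, together with the sign, really are symmetric under the exchange $j\leftrightarrow j_k$, so that the antisymmetry-of-$\epsilon$ argument applies term by term. An entirely equivalent alternative, which I would keep in reserve in case the Levi-Civita notation is deemed too heavy for the paper, is to argue by induction on $n$ via Laplace expansion: expand $(\text{cof}(D\mathbf{w}))_{ij}$ along one of its rows into smaller cofactors of $D\mathbf{w}$ with $w_\ell$ removed, differentiate, use the inductive hypothesis on the $(n-1)$-dimensional Piola identity, and then collapse the remaining terms using equality of mixed partials. Neither approach requires anything beyond elementary multilinear algebra and $C^2$ regularity, which explains why \cite{evans} is cited for a short proof.
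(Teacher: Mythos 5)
Your proof is correct, and since the paper itself defers to \cite{evans} for a ``short proof'' rather than presenting one, your Levi--Civita argument is a clean reconstruction of the standard argument (and its essence matches what Evans does). The key cancellation is sound: after applying the product rule, each term involves a contraction of $\epsilon_{j j_2\cdots j_n}$ (antisymmetric in $j\leftrightarrow j_k$) against $\partial^2 w_{i_k}/\partial x_j\partial x_{j_k}$ (symmetric in $j\leftrightarrow j_k$ by Schwarz's theorem, available since $\mathbf{w}\in[C^2(U)]^n$), with the remaining first-order factors not involving either index, so each term equals its own negative and vanishes.
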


Throughout the rest of this section, we assume $u^\varphi\in H^s$,
set $\ell={\rm min}\{r+1,s\}$, and assume the following bounds
(compare to those of \cite{Feng4} and \eqref{bounds}): for $j=1,2,3,$
\begin{align}\label{bounds2}
&\|u^\varphi\|_{H^j}=O(\eps^{\frac{1-j}{2}}),\quad
\|u^\varphi\|_{W^{2,\infty}}=O(\eps^{-1}),\quad
\|\Phi^\varphi\|_{L^\infty}=O(\eps^{-1}).
\end{align}

We then have the following results.

\begin{lem}\label{mapcenterlem}
There exists a constant $C_1(\eps)=O(\eps^{-1})$ such that
\begin{align}\label{mapcenter}\|I_h u^\varphi-T^\varphi(I_hu^\varphi)\|_{H^2}\le
C_1(\eps)\bigl(\eps^{-2}h^{\ell-2}\|u^\varphi\|_{H^\ell}+\|\delta
\varphi\|_{H^{-2}}\bigr).\end{align}
\end{lem}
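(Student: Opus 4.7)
The plan is to set $e_h := I_h u^\varphi - T^\varphi(I_h u^\varphi)\in V^h_1$, test the defining relation \eqref{Toperator} of $T^\varphi$ with $v_h = e_h$, and subtract the weak formulation \eqref{weakMA1} of the PDE for $u^\varphi$ (legitimate since $e_h\in V^h_0\subset H^2(U)$). The boundary term $\langle\eps^2, e_h\rangle$ and the unperturbed datum $\varphi$ cancel in the subtraction, yielding the error identity
\[
B[e_h, e_h] = \eps\bigl(\Del(I_h u^\varphi - u^\varphi), \Del e_h\bigr) - \bigl(\det(D^2 I_h u^\varphi) - \det(D^2 u^\varphi), e_h\bigr) + (\delta\varphi, e_h).
\]
The remaining task is to bound $B[e_h, e_h]$ from below and each right-hand term from above, then divide.

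First I would invoke the $H^2$-coercivity of $B$ on $V^h_1$: because $u^\varphi$ is convex, $\Phi^\varphi$ is positive semi-definite and $(\Phi^\varphi\nab e_h,\nab e_h)\ge 0$; combined with the zero normal-derivative and zero-mean constraints built into $V^h_1$, Poincar\'e-type inequalities give $\|\Del e_h\|_{\lt}\ge c\|e_h\|_{\htw}$, so $B[e_h,e_h]\ge c\eps\|e_h\|_{\htw}^2$ (this is the same coercivity that underlies the well-definedness of $T^\varphi$ cited from \cite[Theorem 3.5]{Feng4}). The Laplacian term is then controlled by standard interpolation, $|\eps(\Del\xi,\Del e_h)|\le C\eps h^{\ell-2}\|u^\varphi\|_{H^\ell}\|e_h\|_{\htw}$ with $\xi:=I_h u^\varphi-u^\varphi$, and the perturbation term by duality, $|(\delta\varphi,e_h)|\le\|\delta\varphi\|_{H^{-2}}\|e_h\|_{\htw}$.

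The main obstacle is the determinant difference term. For this I would use the integral identity
\[
\det(A) - \det(B) = \int_0^1 \text{cof}\bigl(tA + (1-t)B\bigr){:}(A-B)\,dt
\]
with $A := D^2 I_h u^\varphi$, $B := D^2 u^\varphi$, and set $\Phi_t := \text{cof}(D^2(u^\varphi + t\xi))$. The boundary condition $\normd{e_h}=0$ on $\p U$ together with the row divergence-free property (Lemma \ref{cofactor}) justifies integration by parts,
\[
\bigl(\Phi_t{:}D^2\xi,\, e_h\bigr) = -\bigl(\Phi_t \nab\xi,\, \nab e_h\bigr),
\]
so that no second derivatives of $\xi$ need to be taken. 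Integrating in $t$ and applying H\"older's inequality bounds this term by $\sup_{t\in[0,1]} \|\Phi_t\|_{L^\infty}\|\xi\|_{\ho}\|e_h\|_{\htw}$. Since cofactor entries in $\bR^3$ are quadratic in the Hessian entries, $\|\Phi_t\|_{L^\infty}=O(\eps^{-2})$ by \eqref{bounds2} (using an inverse inequality plus interpolation to maintain $\|D^2 I_h u^\varphi\|_{L^\infty}=O(\eps^{-1})$, which tacitly constrains $h$ relative to $\eps$); combined with $\|\xi\|_{\ho}\le C h^{\ell-1}\|u^\varphi\|_{H^\ell}$, the determinant term is at most $C\eps^{-2}h^{\ell-1}\|u^\varphi\|_{H^\ell}\|e_h\|_{\htw}$.

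Combining the three estimates, dividing by $c\eps\|e_h\|_{\htw}$, and absorbing $h^{\ell-1}\le h^{\ell-2}$ (since $h<1$), I obtain $\|e_h\|_{\htw}\le C\eps^{-1}\bigl(\eps^{-2}h^{\ell-2}\|u^\varphi\|_{H^\ell}+\|\delta\varphi\|_{H^{-2}}\bigr)$, which is \eqref{mapcenter} with $C_1(\eps)=O(\eps^{-1})$. The delicate point is verifying uniform control of $\|\Phi_t\|_{L^\infty}$ along the homotopy from $u^\varphi$ to $I_h u^\varphi$; everything else is a standard Galerkin-type energy argument adapted from \cite{Feng4}.
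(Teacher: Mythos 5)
Your overall structure mirrors the paper's: set $e_h = I_h u^\varphi - T^\varphi(I_h u^\varphi)$, subtract the weak formulation to expose $\det(D^2 u^\varphi)-\det(D^2 I_h u^\varphi)$ plus $\delta\varphi$, rewrite the determinant difference via the mean value (homotopy) representation with a cofactor factor, estimate $\|\Phi_t\|_{L^\infty} = O(\eps^{-2})$ from \eqref{bounds2}, and finish with the $O(\eps)$-coercivity of $B$ on the constrained space. That much agrees with the paper's argument and is fine.

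The genuine difference --- and the gap --- is in your treatment of the cofactor term. The paper leaves it as $(\Upsilon^\eps{:}D^2\eta,\,v_h)$ and bounds it directly by $\|\Upsilon^\eps\|_{L^\infty}\|D^2\eta\|_{L^2}\|v_h\|_{L^2} \lesssim \eps^{-2} h^{\ell-2}\|u^\varphi\|_{H^\ell}\|v_h\|_{H^2}$, never integrating by parts. You instead integrate by parts via Lemma \ref{cofactor} to trade $D^2\xi$ for $\nabla\xi$, claiming $(\Phi_t{:}D^2\xi,\,e_h) = -(\Phi_t\nabla\xi,\,\nabla e_h)$ on the strength of $\normd{e_h}=0$. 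That identity does not follow from the stated conditions. The divergence-free row property kills the bulk term $(\Div\Phi_t)\cdot\nabla\xi\,e_h$, but the integration by parts leaves the boundary term
\begin{equation*}
\int_{\partial U}\bigl(\Phi_t\nu\cdot\nabla\xi\bigr)\,e_h\,ds,
\end{equation*}
and neither factor vanishes: $e_h$ is not zero on $\partial U$ (only $\normd{e_h}$ is), and even though $\nabla\xi\cdot\nu=0$ there, the vector $\Phi_t\nu$ has a tangential component so $\Phi_t\nu\cdot\nabla\xi\neq 0$ in general. Moreover, since $\Phi_t$ involves $D^2(I_h u^\varphi)$, which is only piecewise smooth, the integration by parts also produces uncontrolled jump contributions across interelement faces; in the paper's one place where such an integration by parts \emph{is} used (Lemma \ref{contractinglem}) the argument introduces mollifications $v_h^\mu$, $w_h^\mu$ precisely to sidestep this issue, which you omit.

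The good news is the gap is dispensable: you already downgrade $h^{\ell-1}$ to $h^{\ell-2}$ at the end, so the sharper $\|\nabla\xi\|_{L^2}$-bound buys you nothing toward \eqref{mapcenter}. Simply keep the determinant difference as $(\Upsilon^\eps{:}D^2\xi,\,e_h)$ and estimate it by $C\eps^{-2}\|D^2\xi\|_{L^2}\|e_h\|_{L^2}\le C\eps^{-2}h^{\ell-2}\|u^\varphi\|_{H^\ell}\|e_h\|_{H^2}$, as the paper does; then your remaining steps (coercivity $B[e_h,e_h]\ge c\eps\|e_h\|_{H^2}^2$, the $\eps(\Delta\xi,\Delta e_h)$ and $(\delta\varphi,e_h)$ bounds, division by $c\eps\|e_h\|_{H^2}$) go through unchanged and deliver $C_1(\eps)=O(\eps^{-1})$.
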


\begin{proof}
To ease notation set $s_h=I_hu^\varphi-T^\varphi(I_hu^\varphi)$ and
$\eta=I_hu^\varphi-u^\varphi$.  Then for any $v_h\in V^h_0$, we use
the Mean Value Theorem to get
\begin{align*}
B[s_h,v_h]&=\eps(\Delta(I_hu^\varphi,\Delta v_h)-(\det(D^2(I_hu^\varphi),v_h)\\
&\hspace{0.4in}+(\tilde{\varphi},v_h)+\langle \eps^2,v_h\rangle\\
&=\eps(\Delta \eta,\Delta v_h)+(\det(D^2u^\varphi)-\det(D^2(I_hu^\varphi),v_h)
+(\delta \varphi,v_h)\\
&=\eps(\Delta \eta,\Delta v_h)+(\Upsilon^\eps:D^2(u^\varphi-I_hu^\varphi),v_h)
+(\delta \varphi,v_h),
\end{align*} 
where $\Upsilon^\eps={\rm cof}(\tau D^2(I_h u^\varphi)+(1-\tau)D^2u^\varphi)$
for $\tau\in [0,1]$.

On noting that
\[
|\Upsilon_{ij}|=|\text{cof}(\tau D^2(I_h u^\varphi)+(1-\tau)D^2u^\varphi)_{ij}|
=\bigl|\det\bigl(\tau D^2(I_h u^\varphi)\big|_{ij}+(1-\tau)D^2u^\varphi\big|_{ij}\bigr)\bigr|,
\]
where $D^2 u^\varphi\big|_{ij}$ denotes the resulting $2\times 2$ matrix
after deleting the $i^{th}$ row and $j^{th}$ column of $D^2u^\varphi$, we obtain 
\begin{align*}
|(\Psi^\eps)_{ij}|&\le 2 \max_{k\neq i,\ell\neq j}
\left(|\tau (D^2(I_h u^\varphi)_{k\ell}+(1-\tau)(D^2 u^\varphi)_{k\ell}|\right)^2\\
&\le C \max_{k\neq i, \ell\neq j}|(D^2 u^\varphi)_{k\ell}|^2
\le C \|D^2 u^\varphi\|_{L^\infty}^2.
\end{align*}
Hence, from \eqref{bounds2} it follows that 
$\|\Upsilon^\eps\|_{L^\infty}=O(\eps^{-2})$.  Thus,
\begin{align*}
B[s_h,v_h] &\le \eps\|\Delta \eta\|_\lt
\|\Delta v_h\|_\lt + C\eps^{-2}\|D^2 \eta\|_\lt\|v_h\|_\lt
+\|\delta \varphi\|_{H^{-2}}\|v_h\|_\htw\\
&\le C\bigl(\eps^{-2}\|\eta\|_{H^2}+\|\delta
\varphi\|_{H^{-2}}\bigr)\|v_h\|_{H^2}.
\end{align*}

Finally, using the coercivity of $B[\cdot,\cdot]$ we get
\begin{align*}
\|s_h\|_\htw &\le C_1(\eps)\bigl(\eps^{-2}h^{\ell-2}\|u^\varphi\|_{H^\ell}
+\|\delta \varphi\|_{H^{-2}}\bigr).
\end{align*}
The proof is complete.
\end{proof}

\begin{lem}\label{contractinglem}
There exists $h_0>0$ such that for $h\le h_0$, there exists an
$\rho=\rho(h,\eps)$ such that for any $v_h,w_h\in \mathbb{B}_{h}(\rho)$ 
there holds
\begin{align}\label{contracting}
\|T^\varphi(v_h)-T^\varphi(w_h)\|_\htw\le \frac12 \|v_h-w_h\|_\htw.
\end{align}
\end{lem}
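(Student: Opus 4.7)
The plan is to follow the Newton-map contraction strategy of \cite{Feng4}, adapting it to the Neumann-type boundary condition \eqref{intro7}. I would set $z_h := T^\varphi(v_h) - T^\varphi(w_h) \in V^h_1$ and $e_h := v_h - w_h$, then subtract the two defining identities \eqref{Toperator} for $T^\varphi(v_h)$ and $T^\varphi(w_h)$. Using the definition \eqref{Bdef} of $B$ to absorb the $\eps(\Delta e_h, \Delta \chi_h)$ contribution on the right, this produces
\begin{align*}
B[z_h, \chi_h] = (\Phi^\varphi \nabla e_h, \nabla \chi_h) + \bigl(\det(D^2 v_h) - \det(D^2 w_h), \chi_h\bigr) \qquad \forall \chi_h \in V^h_0.
\end{align*}

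Next I would exploit the Newton-type cancellation of the linearization via the exact integral expansion
\begin{align*}
\det(D^2 v_h) - \det(D^2 w_h) = \Theta : D^2 e_h, \qquad \Theta := \int_0^1 \text{cof}\bigl(D^2(s v_h + (1-s) w_h)\bigr)\,ds.
\end{align*}
Since each integrand has row divergence-free entries by Lemma \ref{cofactor}, so does $\Theta$. Splitting $\Theta = \Phi^\varphi + (\Theta - \Phi^\varphi)$ and integrating the $\Phi^\varphi$-part by parts (the cross terms involving $\partial_j (\Phi^\varphi)_{ij}$ vanish by divergence-freeness) produces
\begin{align*}
(\Phi^\varphi : D^2 e_h, z_h) + (\Phi^\varphi \nabla e_h, \nabla z_h) = \int_{\p U} (\Phi^\varphi \nabla e_h \cdot \nu)\, z_h\, dS,
\end{align*}
so that choosing $\chi_h = z_h$ above gives the key identity
\begin{align*}
B[z_h, z_h] = \int_{\p U} (\Phi^\varphi \nabla e_h \cdot \nu)\, z_h\, dS + \bigl((\Theta - \Phi^\varphi) : D^2 e_h,\, z_h\bigr).
\end{align*}

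The boundary integral is treated using the structural identity $\Phi^\varphi \nu \parallel \nu$ on $\p U$: tangential differentiation of the condition $\p_\nu u^\varphi = 0$ forces the mixed normal--tangential entries of $D^2 u^\varphi$ to vanish on flat portions of $\p U$, and a direct computation then shows $\Phi^\varphi \nu$ is a scalar multiple of $\nu$, with curvature-induced corrections for non-flat $\p U$ of strictly lower order. Combined with $\p_\nu e_h = 0$ on $\p U$, this makes the leading boundary integrand vanish. For the interior remainder, I would use the Lipschitz estimate
\begin{align*}
\|\Theta - \Phi^\varphi\|_{L^\infty} \le C \eps^{-1} \max_{s \in [0,1]} \|D^2(u_s - u^\varphi)\|_{L^\infty}
\end{align*}
(which follows because $\text{cof}$ is polynomial of degree $d-1$ in the entries of $D^2 u$ and $\|D^2 u^\varphi\|_{L^\infty} = O(\eps^{-1})$), together with the inverse inequality and the membership $v_h, w_h \in \mathbb{B}_h(\rho)$ to get
\begin{align*}
\|\Theta - \Phi^\varphi\|_{L^\infty} \le C \eps^{-1} \bigl(h^{-d/2}\rho + h^{\ell - 2 - d/2}\|u^\varphi\|_{H^\ell}\bigr).
\end{align*}
Cauchy--Schwarz together with the coercivity $B[z_h, z_h] \ge C \eps \|z_h\|_{H^2}^2$ (from positive semidefiniteness of $\Phi^\varphi$ and Neumann elliptic regularity on $V^h_1$) then yield an estimate of the form $\|z_h\|_{H^2} \le g(h, \eps, \rho)\|e_h\|_{H^2}$.

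The desired contraction with factor $\tfrac12$ is secured by choosing $\rho = \rho(h, \eps)$ so that $g(h, \eps, \rho) \le \tfrac12$, provided $h \le h_0$. The main obstacle is the careful tracking of the negative powers of $\eps$ generated by the singularly perturbed regime and the balancing of $\rho$ against $h^{-d/2}$ from the inverse inequality: $\rho$ must be small enough to produce the $\tfrac12$ contraction constant, yet large enough that in the companion Lemma \ref{mapcenterlem} we still have $\|I_h u^\varphi - T^\varphi(I_h u^\varphi)\|_{H^2} \le \rho$ so that $T^\varphi$ maps $\mathbb{B}_h(\rho)$ into itself. A secondary technical hurdle, absent in the Dirichlet setting of \cite{Feng4}, is the rigorous justification of the vanishing (or absorbable smallness) of the boundary integral, which here rests essentially on the $\Phi^\varphi \nu \parallel \nu$ observation above.
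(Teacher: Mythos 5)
Your proof is essentially correct but follows a genuinely different route from the paper's. The paper applies a pointwise Mean-Value-Theorem factorization $\det(D^2v_h^\mu)-\det(D^2w_h^\mu)=\Psi_h:(D^2v_h^\mu-D^2w_h^\mu)$ to the \emph{mollified} functions $v_h^\mu,w_h^\mu$, then uses Lemma \ref{cofactor} on $\Psi_h$ itself (which is why mollifications are needed: $v_h,w_h$ are not $C^3$, so the divergence-free identity cannot be applied to a cofactor built from their Hessians directly). It then estimates $\|\Phi^\varphi-\Psi_h\|_{L^2}$ and sends $\mu\to 0$. Your decomposition $\det(D^2v_h)-\det(D^2w_h)=\Theta:D^2e_h$ with $\Theta-\Phi^\varphi$ separated off and integration by parts applied \emph{only} to the smooth factor $\Phi^\varphi$ is cleaner: it eliminates the mollification machinery entirely, since you never invoke divergence-freeness of $\Theta$ (your remark that $\Theta$ is divergence-free is in fact not rigorously justifiable for piecewise polynomials, but you never use it, so no harm done). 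Your explicit treatment of the boundary integral, via $D^2u^\varphi\nu\parallel\nu$ on $\partial U$ (from tangential differentiation of $\partial_\nu u^\varphi=0$) together with $\partial_\nu e_h=0$, is also a real improvement over the paper, which silently drops the boundary contribution that arises in its own integration by parts; making this vanishing explicit is needed for the argument to close under the Neumann constraint.

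The one place your route is slightly less sharp is the pairing $((\Theta-\Phi^\varphi):D^2e_h, z_h)$ bounded with $\|\Theta-\Phi^\varphi\|_{L^\infty}$, which costs an extra inverse-inequality factor $h^{-d/2}$ not present in the paper's $L^2$-based estimate $\|\Phi^\varphi-\Psi_h\|_{L^2}\|\nabla e_h\|_{L^6}\|\nabla z_h\|_{L^6}$; this propagates to a slightly smaller admissible $\rho=O(\eps^2h^{3/2})$ versus the paper's $\rho=O(\min\{\eps^2,\eps h^{3/2}\})$, and a correspondingly more restrictive $h_0$. This is harmless for the lemma as stated (existence of some $h_0,\rho$), but it does tighten the compatibility constraint with Lemma \ref{mapcenterlem} and the hypothesis $\|\delta\varphi\|_{H^{-2}}$ in Theorem \ref{errorrelationthm}, so if you wanted to reproduce the paper's exact $\eps$-dependence downstream you should switch to an $L^2\times L^6\times L^6$ (or $L^2\times L^4\times L^4$ in $2$-d) pairing after one more integration by parts, as the paper does. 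Finally, your coercivity claim $B[z_h,z_h]\ge c\eps\|z_h\|^2_{H^2}$ is correct but should be stated as using $\Phi^\varphi\ge 0$ (convexity of $u^\varphi$) together with the $H^2$ a priori estimate $\|\Delta z_h\|_{L^2}\ge c\|z_h\|_{H^2}$ for $z_h$ with $\partial_\nu z_h=0$, $(z_h,1)=0$; the paper invokes the same fact as ``coercivity of $B$'' without elaboration.
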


\begin{proof}
From the definitions of $T^\varphi(v_h)$ and $T^\varphi(w_h)$ we get
for any $z_h\in V^h_0$
\begin{align*}
B[T^\varphi(v_h)&-T^\varphi(w_h),z_h]\\
&=\left(\Phi^\varphi(\nabla v_h-\nabla w_h),\nabla z_h\right)
+ \left(\text{det}(D^2v_h)-\text{det}(D^2w_h),z_h\right).
\end{align*}

Adding and subtracting $\det(D^2v_h^\mu)$ and $\det(D^2w_h^\mu)$,
where $v_h^\mu$ and $w_h^\mu$ denote the standard mollifications of
$v_h$ and $w_h$, respectively, yields
\begin{align*}
&B[T^\varphi(v_h)-T^\varphi(w_h),z_h] \\
&\quad =(\Phi^\varphi(t_m)(\nabla v_h-\nabla w_h),\nabla z_h)
+(\text{det}(D^2v^\mu_h) -\text{det}(D^2w^\mu_h),z_h)\\
&\hspace{1.75cm}
+(\text{det}(D^2v_h)-\text{det}(D^2v_h^\mu),z_h)+(\text{det}(D^2w^\mu_h)
-\text{det}(D^2w_h),z_h)\\
&\quad =(\Phi^\varphi (\nabla v_h-\nabla w_h),\nabla z_h)
+(\Psi_h:(D^2v^\mu_h-D^2w^\mu_h),z_h)\\
&\hspace{1.75cm}
+(\text{det}(D^2v_h)-\text{det}(D^2v_h^\mu),z_h)
+(\text{det}(D^2w^\mu_h)-\text{det}(D^2w_h),z_h),
\end{align*}
where $\Psi_h=\text{cof}(D^2v_h^\mu+\tau(D^2w_h^\mu-D^2v_h^\mu))$ 
for $\tau\in [0,1]$.

Using Lemma \ref{cofactor} and Sobolev's inequality we have
\begin{align} &\label{lem52x}
B[T^\varphi(v_h)-T^\varphi(w_h),z_h]\\
&\qquad =((\Phi^\varphi-\Psi_h)(\nabla v_h-\nabla w_h),\nabla z_h)
+(\Psi_h(\nabla v_h-\nabla v_h^\mu),\nabla z_h)\nonumber\\
&\qquad\qquad +(\Psi_h(\nabla w_h^\mu-\nabla w_h),z_h)+(\text{det}(D^2v_h)
-\text{det}(D^2v_h^\mu),z_h)\nonumber\\
&\qquad\qquad +(\text{det}(D^2w^\mu_h)-\text{det}(D^2w_h),z_h)\nonumber\\
&\qquad \le C\Bigl\{ \|\Phi^\varphi-\Psi_h\|_\lt\|v_h-w_h\|_\htw
+\|\Psi_h\|_\lt \bigl[\|v_h-v_h^\mu\|_\htw\nonumber\\
&\qquad\qquad +\|w_h-w_h^\mu\|_\htw \bigr]
+\|\text{det}(D^2v_h)-\text{det}(D^2v_h^\mu)\|_\lt\nonumber\\
&\qquad\qquad +\|\text{det}(D^2w_h)-\text{det}(D^2w_h^\mu)\|_\lt \Bigr\}
\|z_h\|_{H^2}.\nonumber
\end{align}
It follows from the Mean Value Theorem that
\begin{align*}
\|(\Phi^\varphi -\Psi_h)_{ij}\|_\lt
&=\|\det(D^2 u^\varphi\big|_{ij})-\det(D^2 v_h^\mu\big|_{ij}
+\tau (D^2 w_h^\mu\big|_{ij}-D^2 v_h^\mu\big|_{ij}))\|_\lt\\
&=\|\Lambda^{ij} :(D^2 u^\varphi \big|_{ij}-(D^2 v_h^\mu\big|_{ij}
+\tau(D^2 w_h^\mu\big|_{ij}-D^2 v_h^\mu\big|_{ij})))\|_\lt,
\end{align*} 
where
$\Lambda^{ij}=\text{cof}(D^2 u^\varphi\big|_{ij}+\lambda(D^2
v_h^\mu\big|_{ij}+\tau(D^2 w_h^\mu\big|_{ij}-D^2
v_h^\mu\big|_{ij})))\in \mathbf{R}^{2\times 2}$ for $\lambda\in [0,1]$.

We bound $\|\Lambda^{ij}\|_{L^\infty}$ as follows:
\begin{align*}
\|\Lambda^{ij}\|_{L^\infty}&=\|\text{cof}(D^2 u^\varphi\big|_{ij}
+\lambda(D^2 v_h^\mu\big|_{ij}+\tau(D^2 w_h^\mu\big|_{ij}-D^2 v_h^\mu\big|_{ij})))
\|_{L^\infty}\\
&=\|D^2 u^\varphi \big|_{ij}+\lambda(D^2 v_h^\mu\big|_{ij}
+\tau(D^2 w_h^\mu\big|_{ij}-D^2 v_h^\mu\big|_{ij}))\|_{L^\infty}\\
&\le C\bigl(\eps^{-1}+h^{-\frac32}\rho+\|D^2v_h^\mu-D^2v_h)\|_{L^\infty}
+\|D^2w_h^\mu-D^2w_h\|_{L^\infty}\bigr),
\end{align*}
where we used the triangle inequality followed by the inverse
inequality and \eqref{bounds2}. Combining the above two inequalities we get 
\begin{align*}
\|(\Phi^\varphi-\Psi_h)_{ij}\|_\lt
&\le \|\Lambda^{ij}\|_{L^\infty}\|D^2 u^\varphi\big|_{ij}-(D^2 v_h^\mu\big|_{ij}
+\tau(D^2 w_h^\mu\big|_{ij}-D^2 v_h^\mu\big|_{ij}))\|_\lt\\
&\le C\bigl(\eps^{-1}+h^{-\frac32}\rho+\|D^2v_h^\mu-D^2v_h\|_{L^\infty}
+\|D^2w_h^\mu-D^2w_h\|_{L^\infty}\bigr)\\
&\quad \times\bigl(h^{\ell-2}\|u^\varphi\|_{H^\ell} +\rho
+\|D^2v_h-D^2v_h^\mu\|_\lt+\|D^2w_h^\mu-D^2w_h\|_\lt\bigr).
\end{align*}
Hence,
\begin{align}\label{phibound}
\|\Phi^\varphi-\Psi_h\|_\lt &\le C\bigl(\eps^{-1}
+h^{-\frac32}\rho+\|D^2v_h^\mu-D^2v_h\|_{L^\infty}
+\|D^2w_h^\mu-D^2w_h\|_{L^\infty}\bigr)\\
&\quad \times\bigl(h^{\ell-2}\|u^\varphi\|_{H^\ell}+\rho+\|D^2v_h-D^2v_h^\mu\|_\lt
+\|D^2w_h^\mu-D^2w_h\|_\lt\bigr). \nonumber 
\end{align}

Applying \eqref{phibound} to \eqref{lem52x} and setting $\mu\to 0$ yield
\begin{align*}
B[T^\varphi(v_h)-T^\varphi(w_h),z_h]
\le C\bigl(\eps^{-1}+h^{-\frac32}\rho\bigr)
\bigl(h^{\ell-2}\|u^\varphi\|_{H^\ell}+\rho \bigr) \|v_h-w_h\|_{H^2} \|z_h\|_{H^2}.
\end{align*}
Using the coercivity of $B[\cdot,\cdot]$ we get
\begin{align}\label{Testimate}
\|T^\varphi(v_h) -T^\varphi(w_h)\|_{H^2}
\le C\eps^{-1}\bigl(\eps^{-1}+h^{-\frac32}\rho\bigr)
\bigl(h^{\ell-2}\|u^\varphi\|_{H^\ell}+\rho\bigr)\|v_h-w_h\|_{H^2}.  
\end{align}

Finally, setting 
$h_0=O\left(\frac{\eps^2}{\|u^\varphi\|_{H^\ell}}\right)^{\frac{1}{\ell-2}}$,
$h\le h_0$, and $\rho=O({\rm min}\{\eps^2,\eps h^\frac32\})$,
it then follows from \eqref{Testimate} that
\begin{align*}
\|T^\varphi(v_h)-T^\varphi(w_h)\|_{H^2}
\le \frac12 \|v_h-w_h\|_\htw\spa \forall v_h,w_h\in \mathbb{B}_{h}(\rho).
\end{align*} The proof is complete
\end{proof}

With the help of the above two lemmas, we are ready to state and prove
our main results of this section.


\begin{thm}\label{errorrelationthm}
Suppose $\|\delta \varphi\|_{H^{-2}}=O({\rm min}\{\eps^3,\eps^2h^{\frac32}\})$. 
Then there exists an $h_1>0$ such that for $h\le h_1$, there exists a unique 
solution $u^\varphi_h\in V^h_1$ solving \eqref{uvarphih}.
Furthermore, there holds the following error estimate: 
\begin{align}\label{errorrelation}
\|u^\varphi-u^\varphi_h\|_\htw 
\le C_2(\eps)\bigl(\eps^{-2}h^{\ell-2}\|u^\varphi\|_{H^\ell}+\|\delta
\varphi\|_{H^{-2}}\bigr)
\end{align} 
with $C_2(\eps)=O(\eps^{-1})$.
\end{thm}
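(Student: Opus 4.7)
The plan is to establish existence and uniqueness of $u^\varphi_h$ via the Banach fixed point theorem applied to the operator $T^\varphi: V^h_1 \to V^h_1$ introduced in \eqref{Toperator}, exploiting the fact already noted in the text that any fixed point of $T^\varphi$ solves \eqref{uvarphih}. The ball on which I will apply the theorem is $\mathbb{B}_h(\rho)$ with $\rho = O(\min\{\eps^2,\eps h^{3/2}\})$, precisely the radius for which Lemma \ref{contractinglem} yields the $\tfrac12$-contraction property. The constraint $\|\delta\varphi\|_{H^{-2}} = O(\min\{\eps^3,\eps^2 h^{3/2}\})$ in the hypothesis is chosen exactly so that $T^\varphi$ also maps $\mathbb{B}_h(\rho)$ into itself.

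First I would verify the self-mapping property. For $v_h \in \mathbb{B}_h(\rho)$, a triangle inequality gives
\[
\|T^\varphi(v_h) - I_h u^\varphi\|_{H^2} \le \|T^\varphi(v_h) - T^\varphi(I_h u^\varphi)\|_{H^2} + \|T^\varphi(I_h u^\varphi) - I_h u^\varphi\|_{H^2}.
\]
The first term is bounded by $\tfrac12 \|v_h - I_h u^\varphi\|_{H^2} \le \rho/2$ by Lemma \ref{contractinglem}, while the second term is bounded by $C_1(\eps)(\eps^{-2}h^{\ell-2}\|u^\varphi\|_{H^\ell} + \|\delta\varphi\|_{H^{-2}})$ by Lemma \ref{mapcenterlem}. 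Since $C_1(\eps) = O(\eps^{-1})$, the smallness hypothesis on $\|\delta\varphi\|_{H^{-2}}$ combined with choosing $h$ small enough (say $h\le h_1$ with $h_1 \le h_0$ further reduced so that $\eps^{-3}h^{\ell-2}\|u^\varphi\|_{H^\ell} \le \tfrac12 \min\{\eps^2, \eps h^{3/2}\}$) ensures the second term is at most $\rho/2$ as well, so $T^\varphi(v_h) \in \mathbb{B}_h(\rho)$.

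Combining the self-mapping property with the contraction estimate of Lemma \ref{contractinglem}, the Banach fixed point theorem produces a unique $u^\varphi_h \in \mathbb{B}_h(\rho)$ satisfying $T^\varphi(u^\varphi_h) = u^\varphi_h$, hence solving \eqref{uvarphih}. For the error bound I would then apply the triangle inequality
\[
\|u^\varphi - u^\varphi_h\|_{H^2} \le \|u^\varphi - I_h u^\varphi\|_{H^2} + \|I_h u^\varphi - T^\varphi(I_h u^\varphi)\|_{H^2} + \|T^\varphi(I_h u^\varphi) - u^\varphi_h\|_{H^2}.
\]
Standard interpolation bounds the first term by $h^{\ell-2}\|u^\varphi\|_{H^\ell}$; Lemma \ref{mapcenterlem} bounds the second; and the contraction property (writing $u^\varphi_h = T^\varphi(u^\varphi_h)$ and using $u^\varphi_h \in \mathbb{B}_h(\rho)$) controls the third by $\tfrac12\|I_h u^\varphi - u^\varphi_h\|_{H^2}$, which can be absorbed after one more triangle inequality. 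Tracking the $\eps$-powers, the dominant constant is $C_2(\eps) = O(\eps^{-1})$ arising from $C_1(\eps)$, yielding \eqref{errorrelation}.

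The main obstacle, and really the only place where care is needed, is bookkeeping the $\eps$-dependence throughout: the radius $\rho$, the mesh threshold $h_1$, the constant $C_1(\eps)$, and the perturbation tolerance $\|\delta\varphi\|_{H^{-2}}$ are all tied together through the requirement that $T^\varphi$ be a self-map of $\mathbb{B}_h(\rho)$. Getting these constraints mutually compatible while keeping the final constant $C_2(\eps) = O(\eps^{-1})$ (rather than a higher negative power of $\eps$) requires one to choose the smallness thresholds in the precise balanced form stated in the theorem; this is why the hypothesis on $\|\delta\varphi\|_{H^{-2}}$ takes the particular form $O(\min\{\eps^3,\eps^2 h^{3/2}\})$ rather than a simpler expression.
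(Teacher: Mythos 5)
Your proposal is correct and follows essentially the same route as the paper: the same fixed-point setup on the ball $\mathbb{B}_h(\rho)$ with $\rho=O(\min\{\eps^2,\eps h^{3/2}\})$, the same appeal to Lemmas \ref{mapcenterlem} and \ref{contractinglem} for the self-map and contraction properties, and the same bookkeeping of $\eps$-powers. The only cosmetic difference is in the final estimate: the paper shrinks the ball radius to $\rho_1=2C_1(\eps)(\eps^{-2}h^{\ell-2}\|u^\varphi\|_{H^\ell}+\|\delta\varphi\|_{H^{-2}})$ so that the bound $\|I_hu^\varphi-u^\varphi_h\|_{H^2}\le\rho_1$ immediately yields \eqref{errorrelation}, whereas you keep the larger contraction radius and recover the sharp bound via an absorption step; both lead to $C_2(\eps)=O(\eps^{-1})$, and in fact your invocation of Banach rather than Brouwer is the more precise justification for uniqueness.
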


\begin{proof}
To show the first claim, we set
\[
h_1=O\left({\rm min}\left\{\left(\frac{\eps^4}{\|u^\varphi\|_{H^\ell}}
\right)^{\frac{2}{2\ell-7}},\left(\frac{\eps^5}{\|u^\varphi\|_{H^\ell}}
\right)^{\frac{1}{\ell-2}}\right\}\right). 
\] 
Fix $h\le h_1$ and set $\rho_1=2C_1(\eps)\bigl(\eps^{-2}h^{\ell-2}
\|u^\varphi\|_{H^\ell}+\|\delta \varphi\|_{H^{-2}}\bigr)$.
Then we have $\rho_1\le C{\rm min}\{\eps^2,\eps h^\frac32\}$.

Next, let $v_h\in \mathbb{B}_{h}(\rho_1)$.  Using the triangle
inequality and Lemmas \ref{mapcenterlem} and \ref{contractinglem} we get
\begin{align*}
\|I_h u^\varphi - T^\varphi(v_h)\|_{H^2}
&\le \|I_h u^\varphi-T^\varphi(I_hu^\varphi)\|_{H^2}
+ \|T^\varphi(I_h u^\varphi )-T^\varphi(v_h)\|_{H^2}\\
&\le C_1(\eps)\bigl(\eps^{-2}h^{\ell-2}\|u^\varphi\|_{H^\ell}
+\|\delta \varphi\|_{H^{-2}}\bigr)+\frac12 \|I_hu^\varphi-v_h\|_{H^2}\\
&\le \frac{\rho_1}{2}+\frac{\rho_1}{2}=\rho_1.
\end{align*}
Hence, $T^\varphi(v_h)\in \mathbb{B}_{h}(\rho_1)$.  In addition, by
\eqref{contracting} we know that $T^\varphi$ is a contracting
mapping in $\mathbb{B}_{h}(\rho_1)$. Thus, the Brouwer's Fixed Point Theorem
\cite{Gilbarg_Trudinger01} guarantees that there exists a unique
fixed point $u^\varphi_h\in \mathbb{B}_{h}(\rho_1)$ which is a solution
to \eqref{uvarphih}.

Finally, using the triangle inequality we get
\begin{align*}
\|u^\varphi-u^\varphi_h\|_{H^2}&\le \|u^\varphi-I_hu^\varphi\|_{H^2}
+\|I_hu^\varphi-u^\varphi_h\|_{H^2}
\le Ch^{\ell-2}\|u^\varphi\|_{H^\ell}+\rho_1\\
&\le C_2(\eps)\bigl(\eps^{-2}h^{\ell-2}\|u^\varphi\|_{H^\ell}
+\|\delta \varphi\|_{H^{-2}}\bigr).
\end{align*}
\end{proof}


\begin{thm}\label{errorrelation2thm}
In addition to the hypothesis of Theorem \ref{errorrelationthm},
assume that the linearization of $M^\vepsi$ at $u^\varphi$ 
(see \eqref{Bdef}) is $H^3$-regular with the regularity
constant $C_s(\eps)$. Furthermore, assume that 
$\|\delta \varphi\|_{H^{-2}}=O(C^{-1}_s(\eps)\eps^{2})$.
Then there exists an $h_2>0$ such that for $h\le h_2$, there holds
\begin{align}\label{errorrelation2}
\|u^\varphi-u^\varphi_h\|_\ho
&\le C_s(\eps)\Bigl\{C_3(\eps)h^{\ell-1}\|u^\varphi\|_{H^\ell}
+(C_4(\eps)h+1)\|\delta \varphi\|_{H^{-2}}\Bigr\},
\end{align} 
where $C_3(\eps)=C_2(\eps)\eps^{-\frac52}$ and 
$C_4(\eps)=C_2(\eps)\eps^{-\frac12}.$
\end{thm}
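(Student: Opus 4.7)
The plan is a standard Aubin--Nitsche duality argument, building on the $H^2$ error estimate already established in Theorem \ref{errorrelationthm}. Set $e:=u^\varphi-u^\varphi_h$. First I would introduce a dual problem: find $z\in H^3(U)$ with $(z,1)=0$ and $\p z/\p\nu=0$ on $\p U$ satisfying
\begin{align*}
B[v,z]=(\nab e,\nab v)\qquad\forall\,v\in H^2(U).
\end{align*}
The postulated $H^3$-regularity of the linearization then yields $\|z\|_{H^3}\le C_s(\eps)\|e\|_{H^1}$. Testing with $v=e$ gives $\|e\|_{H^1}^2=B[e,z]$, and the decomposition
\begin{align*}
B[e,z]=B[e,z-I_hz]+B[e,I_hz],
\end{align*}
where $I_hz\in V^h_0$ is an appropriate finite element interpolant of $z$, becomes the starting point.

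For the first piece I would use the continuity bound $|B[v,w]|\le C\bigl(\eps\|v\|_{H^2}\|w\|_{H^2}+\eps^{-1}\|v\|_{H^1}\|w\|_{H^1}\bigr)$, obtained from the assumption $\|\Phi^\varphi\|_{L^\infty}=O(\eps^{-1})$ in \eqref{bounds2}, together with the standard interpolation estimates $\|z-I_hz\|_{H^j}\le Ch^{3-j}\|z\|_{H^3}$ for $j=1,2$ and the $H^2$ error bound from Theorem \ref{errorrelationthm}. Careful tracking of the $\eps$-powers then produces a contribution to $\|e\|_{H^1}^2$ of the desired form $C_s(\eps)\bigl(C_3(\eps)h^{\ell-1}\|u^\varphi\|_{H^\ell}+C_4(\eps)h\|\delta\varphi\|_{H^{-2}}\bigr)\|e\|_{H^1}$.

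For the second piece I would exploit the Galerkin-type residual obtained by subtracting \eqref{uvarphih} from \eqref{weakMA1} tested against $I_hz$:
\begin{align*}
\eps(\Del e,\Del I_hz)=\bigl(\det D^2u^\varphi-\det D^2u^\varphi_h,I_hz\bigr)+(\delta\varphi,I_hz).
\end{align*}
A Taylor expansion of the determinant about $D^2u^\varphi$ gives $\det D^2u^\varphi-\det D^2u^\varphi_h=\Phi^\varphi:D^2e-\mathcal{R}[e]$, with a quadratic remainder $\mathcal{R}[e]$. Integrating $(\Phi^\varphi:D^2e,I_hz)$ by parts twice and invoking the row divergence-free property of Lemma \ref{cofactor} converts it to $-(\Phi^\varphi\nab e,\nab I_hz)$, with boundary contributions vanishing thanks to the zero-normal-derivative conditions satisfied by $u^\varphi$, $u^\varphi_h$, and $I_hz$. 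Combining these identities yields $B[e,I_hz]=(\delta\varphi,I_hz)-(\mathcal{R}[e],I_hz)$; the first summand is at most $\|\delta\varphi\|_{H^{-2}}\|I_hz\|_{H^2}\le C\|\delta\varphi\|_{H^{-2}}\|z\|_{H^3}$, which produces the lone ``$+1$'' coefficient of $\|\delta\varphi\|_{H^{-2}}$ in \eqref{errorrelation2}, while the quadratic remainder is absorbed into the $h^{\ell-1}$ and $h$ terms already present by invoking Theorem \ref{errorrelationthm} together with the smallness assumption $\|\delta\varphi\|_{H^{-2}}=O(C_s^{-1}(\eps)\eps^{2})$.

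The main obstacle, as in the $H^2$ analysis of \cite{Feng4} and Theorem \ref{errorrelationthm}, will be the delicate bookkeeping of powers of $\eps$ entering through the $L^\infty$ bound on $\Phi^\varphi$, the continuity constant of $B$, the cofactor expansion, and the interpolation estimates, so that exactly $C_3(\eps)=C_2(\eps)\eps^{-5/2}$ and $C_4(\eps)=C_2(\eps)\eps^{-1/2}$ emerge in the stated form. A secondary technical point is to confirm that the chosen $I_hz$ (taken, if necessary, to be an elliptic projection rather than the nodal interpolant) lies in $V^h_0$ without degrading the $O(h)$ rate in $H^2$ or the $O(h^2)$ rate in $H^1$. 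Once both pieces are combined and the resulting inequality is divided through by $\|e\|_{H^1}$, the desired estimate \eqref{errorrelation2} follows.
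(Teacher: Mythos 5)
Your proposal follows essentially the same duality route as the paper: solve the adjoint problem $B[\phi,z]=(\nabla e,\nabla z)$, use the $H^3$-regularity bound $\|\phi\|_{H^3}\le C_s(\eps)\|\nabla e\|_{L^2}$, insert the interpolant $I_h\phi$, invoke the error equation \eqref{femerroreqn} together with Lemma \ref{cofactor}, and feed in the $H^2$ bound from Theorem \ref{errorrelationthm} while tracking powers of $\eps$. The one substantive difference is that you linearize $\det$ by a Taylor expansion about $D^2u^\varphi$ with an explicit quadratic remainder $\mathcal{R}[e]$, whereas the paper uses the Mean Value Theorem about an intermediate Hessian together with a mollification $u^{\varphi,\mu}_h$ and then bounds $\|\Phi^\varphi-\tilde\Phi\|_{L^2}$; these two devices handle the same nonlinearity and lead to the same $\eps$-dependence (and your variant has the mild advantage of expanding about the smooth $u^\varphi$, so the row-divergence-free identity is applied to $\Phi^\varphi$ directly without mollification), so the overall argument is the same.
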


\begin{proof} Let $e^\varphi:=u^\varphi-u_h^\varphi$ 
and $u^{\varphi,\mu}_h$ denote a standard mollification 
of $u_h^\varphi$. We note that $e^\varphi$
satisfies the following error equation:
\begin{equation}\label{femerroreqn} 
\eps(\Delta e^\varphi,\Delta z_h)+(\text{det}(D^2u_h^\varphi)
-\text{det}(D^2u^\varphi),z_h)+(\delta \varphi,z_h)=0\spa \forall z_h\in V^h_0.
\end{equation}

Using \eqref{femerroreqn}, the Mean Value Theorem, and Lemma \ref{cofactor} we have
\begin{align}\label{e44}
0=\eps(\Delta e^\varphi,\Delta z_h) &-(\tilde{\Phi}\nabla (u^{\varphi,\mu}_h
-u^\varphi),\nabla z_h)+(\delta \varphi,z_h) \\ 
&+(\text{det}(D^2u^\varphi_h)-\text{det}(D^2u^{\varphi,\mu}_h),z_h),
\nonumber
\end{align}
where
$\tilde{\Phi}=\text{cof}(D^2u^{\varphi,\mu}_h
+\tau(D^2u^\varphi-D^2u^{\varphi,\mu}_h))$ for $\tau\in [0,1]$.

Next, let $\phi\in V_0\cap H^3$ be the unique solution to the following problem:
\begin{align*}
B[\phi,z]=(\nabla e^\varphi,\nabla z)\spa \forall z\in V_0.
\end{align*} 
The regularity assumption implies that
\begin{equation}\label{h3regular}
\|\phi\|_{H^3}\le C_s(\eps)\|\nabla e^\varphi\|_\lt.  
\end{equation}
We then have
\begin{align}\nonumber 
\|\nabla e^\varphi\|_\lt^2 &=\eps(\Delta e^\varphi,\Delta \phi)
+(\Phi^\varphi \nabla \phi,\nabla  e_h^\varphi)\\
\nonumber &=\eps(\Delta e^\varphi,\Delta
(\phi-I_h\phi))+(\Phi^\varphi \nabla e^\varphi_h,\nabla (\phi-I_h\phi))
+\eps(\Delta e^\varphi,\Delta (I_h\phi))\\
\nonumber&\quad +(\Phi^\varphi \nabla e^\varphi,\nabla (I_h\phi))
-\eps(\Delta e^\varphi,\Delta (I_h\phi))
-(\tilde{\Phi}\nabla (u^\varphi-u^{\varphi,\mu}_h),\nabla(I_h\phi))\\
\nonumber&\quad -(\text{det}(D^2 u^\varphi_h) 
-\text{det}(D^2 u^{\varphi,\mu}_h),I_h\phi)-(\delta \varphi,I_h\phi)\\
\label{e44a} &=\eps(\Delta e_h^\varphi,\Delta (\phi-I_h\phi))
+(\Phi^\varphi \nabla e^\varphi,\nabla (\phi-I_h\phi))\\
&\nonumber\quad+((\Phi^\varphi-\tilde{\Phi})\nabla e^\varphi,\nabla(I_h\phi)) 
+(\tilde{\Phi}\nabla (u^{\varphi,\mu}_h-u^\varphi_h),\nabla (I_h\phi))\\
&\nonumber\quad+(\text{det}(D^2u^{\varphi,\mu}_h)
-\text{det}(D^2u^\varphi_h),I_h\phi)-(\delta \varphi,I_h\phi)\\
\nonumber&\le \eps\|\Delta e^\varphi\|_\lt \|\Delta (\phi-I_h\phi)\|_\lt
+ C\|\Phi^\varphi\|_\lt \|e^\varphi\|_\htw\|\phi-I_h\phi\|_\htw\\
\nonumber&\quad +C\|\Phi^\varphi-\tilde{\Phi}\|_\lt\|\nabla e^\varphi\|_\lt
\|\nabla (I_h\phi)\|_{L^\infty}
+C\|\tilde{\Phi}\|_\lt\|u^{\varphi,\mu}_h-u^\varphi_h\|_\htw\|I_h\phi\|_\htw\\
\nonumber&\quad
+\|\text{det}(D^2 u^\varphi_h)-\text{det}(D^2 u^{\varphi,\mu}_h)\|_\lt
\|I_h\phi\|_\lt+\|\delta \varphi_h\|_{H^{-2}}\|I_h\phi\|_\htw\\
&\nonumber\le C\Bigl\{\eps^{-\frac12}h\|e^\varphi\|_\htw
+\|\Phi^\varphi-\tilde{\Phi}\|_\lt\|\nabla e^\varphi\|_\lt
+\|\tilde{\Phi}\|_\lt\|u^{\varphi,\mu}_h-u^\varphi_h\|_\lt\\
\nonumber&\quad +\|\text{det}(D^2 u^\varphi_h)-\text{det}(D^2
u^{\varphi,\mu}_h)\|_\lt+\|\delta \varphi_h\|_{H^{-2}} \Bigr\} \|\phi\|_{H^3}.
\end{align}
We bound as $\|\Phi^\varphi-\tilde{\Phi}\|_\lt$ as follows:
\begin{align*}
\|(\Phi^\varphi-\tilde{\Phi})_{ij}\|_\lt
&=\|\text{cof}(D^2u^\varphi)_{ij}-\text{cof}((D^2u^{\varphi,\mu}_h
+\tau(D^2u^\varphi-D^2u^{\varphi,\mu}_h))_{ij})\|_\lt\\
&=\|\det(D^2u^\varphi\big|_{ij})-\det(D^2u^{\varphi,\mu}_h\big|_{ij}
+\tau(D^2u^\varphi\big|_{ij}-D^2u^{\varphi,\mu}_h\big|_{ij}))\|_\lt\\
&=\|\Lambda^{ij}:(D^2u^\varphi\big|_{ij}-(D^2u^{\varphi,\mu}_h\big|_{ij}
+\tau(D^2u^\varphi\big|_{ij}-D^2u^{\varphi,\mu}_h\big|_{ij})))\|_\lt\\
&\le 2\|\Lambda^{ij}\|_{L^\infty}\bigl(\|D^2u^\varphi-D^2u^\varphi_h\|_\lt
+\|D^2u^\varphi_h-D^2u^{\varphi,\mu}_h\|_\lt\bigr),
\end{align*}
where
$\Lambda^{ij}=\text{cof}(D^2u^\varphi\big|_{ij}
+\lambda(D^2u_h^{\varphi,\mu}\big|_{ij}
+\tau(D^2u^\varphi\big|_{ij}-D^2u^{\varphi,\mu}_h\big|_{ij}))),\ \lambda\in [0,1]$.
Notice that we have abused the notation $\Lambda^{ij}$ by defining it
differently in two proofs.

To estimate $\|\Lambda^{ij}\|_{L^\infty}$, we note that $\Lambda^{ij}\in
\mathbf{R}^{2\times 2}$.  Thus for $h\le h_1$
\begin{align*}
\|\Lambda^{ij}\|_{L^\infty}& \le C\big( \|D^2u^\varphi\|_{L^\infty} + \|D^2u^\varphi-D^2u^\varphi_h\|_{L^\infty}+\|D^2u^\varphi_h-D^2u^{\varphi,\mu}_h\|_{L^\infty}\big)\\ 
&\le C\Bigl(\eps^{-1}+C_2(\eps)\bigl(\eps^{-2}h^{\ell-\frac72}\|u^\varphi\|_{H^\ell}
+h^{-\frac32}\|\delta \varphi_h\|_{H^{-2}}\bigr)
+\|D^2u_h^{\varphi,\mu}-D^2u^\varphi_h\|_{L^\infty}\Bigr)\\
&\le C\bigl(\eps^{-1}+\|D^2u_h^{\varphi,\mu}-D^2u^\varphi_h\|_{L^\infty}\bigr).
\end{align*}
where we have used the triangle inequality, the inverse inequality, 
and \eqref{bounds2}. Therefore,
\begin{align}\label{e4.8a}
\|\Phi^\varphi-\tilde{\Phi}^\eps\|_\lt 
&\le C\Bigl(\eps^{-1}+\|D^2u_h^{\varphi,\mu}-D^2u^\varphi_h\|_{L^\infty}\Bigr)\\
&\nonumber \quad\times
\Bigl(C_2(\eps)\bigl(\eps^{-2}\|u^\varphi\|_{H^\ell}+\|\delta
\varphi_h\|_{H^{-2}}\bigr)+\|D^2u^\varphi_h-D^2u^{\varphi,\mu}_h\|_\lt\Bigr).
\end{align}
Using \eqref{e4.8a} and setting $\mu\to 0$ in \eqref{e44a} yield
\begin{align*}
\|\nabla e^\varphi\|_\lt^2&\le C\Bigl\{\eps^{-\frac12}h\|e^\varphi\|_{H^2}
+\|\delta \varphi\|_{H^{-2}}\\ 
&\hspace{0.4in}+\eps^{-1}C_2(\eps)\bigl(\eps^{-2}h^{\ell-2}\|u^\varphi\|_{H^\ell}
+\|\delta\varphi_h\|_{H^{-2}}\bigr)\|\nabla e^\varphi\|_\lt\Bigr\}\|\phi\|_{H^3}.
\end{align*}
It follows from \eqref{h3regular} that
\begin{align*}
\|\nabla e^\varphi\|_\lt&\le C_s(\eps)\Bigl\{\eps^{-\frac12}h\|e_h^\varphi\|_{H^2}
+\|\delta\varphi\|_\lt\\
&\hspace{0.4in}+\eps^{-1}C_2(\eps)\bigl(\eps^{-2}h^{\ell-2}\|u^\varphi\|_{H^\ell}
+\|\delta \varphi_h\|_{H^{-2}}\bigr)\|\nabla e^\varphi\|_\lt\Bigr\}. 
\end{align*}
Set
$h_2=O\left(\frac{\eps^4}{C_s(\eps)
\|u^\varphi\|_{H^\ell}}\right)^{\frac{1}{\ell-2}}$.
On noting that $\|\delta \varphi\|_{H^{-2}}\le \eps(C_s(\eps)C_2(\eps))^{-1}$ 
we have for $h\le {\rm min}\{h_1,h_2\}$
\begin{align*}
\|\nabla e^\varphi\|_\lt
&\le C_s(\eps)\Bigl(C_2(\eps)\eps^{-\frac52}h^{\ell-1}\|u^\varphi\|_{H^\ell}
+\bigl(\eps^{-\frac12}C_2(\eps)h+1\bigr)\|\delta \varphi_h\|_{H^{-2}}\Bigr).
\end{align*}
Thus, \eqref{errorrelation2} follows from Poincare's inequality.
The proof is complete.
\end{proof}

\begin{remark}\label{uboundremark}
Let $(\psi^m_h,\alpha^m_h)$ be generated by Algorithm 1. 
If $\|\alpha^\eps(t_m)-\alpha^{m}_h\|_\lt=O\bigl({\rm min}
\{\eps^3,\eps^2h^{\frac32},C^{-1}_s(\eps)\eps^2\}\bigr)$, then
by Theorems \ref{errorrelationthm} and \ref{errorrelation2thm}, for
$h\le {\rm min}\{h_1,h_2\}$, there exists a unique $\psi^m_h\in
V^h_1$ solving \eqref{geomethod1}, where
\begin{align*}
h_1&=O\left({\rm min}\left\{\left(\frac{\eps^4}{\|\psi^\eps\|_{L^2([0,T];H^\ell)}}
\right)^{\frac{2}{2\ell-7}},\left(\frac{\eps^5}{\|\psi^\eps\|_{L^2([0,T];H^\ell)}}
\right)^{\frac{1}{\ell-2}}\right\}\right),\\
h_2&=\left(\frac{\eps^4}{C_s(\eps)\|\psi^\eps\|_{L^2([0,T];H^\ell)}}
\right)^{\frac{1}{\ell-2}}.
\end{align*}
Furthermore, we have the following error bounds:
\begin{align}
\|\psi^\eps(t_m)-\psi^m_h\|_{H^2}
&\le C_2(\eps)\bigl(\eps^{-2}h^{\ell-2}\|\psi^\eps(t_m)\|_{H^\ell}
+\|\alpha^\eps(t_m)-\alpha^m_h\|_\lt\bigr),\\
\label{errorrelation3}\|\psi^\eps(t_m)-\psi^m_h\|_{H^1}
&\le C_s(\eps)\bigl(C_3(\eps)h^{\ell-1}\|\psi^\eps(t_m)\|_{H^\ell}
\\
&\hskip 1.0in
+(C_4(\eps)h+1)\|\alpha^\eps(t_m)-\alpha^{m}_h\|_\lt \bigr).
\nonumber
\end{align} 
\end{remark}

\begin{remark}
In the two dimensional case, 
\begin{align*}h_1=O\left(\frac{\eps^{\frac52}}{\|\psi^\eps\|_{L^2([0,T];H^\ell)}}\right)^\frac{1}{\ell-2},\qquad
h_2=O\left(\frac{\eps^3}{C_s(\eps)\|\psi^\eps\|_{L^2([0,T];H^\ell)}}\right)^{\frac{1}{\ell-2}},\end{align*}
\begin{align*}\|\psi^\eps(t_m)-\psi^m_h\|_{H^2}
&\le C_2(\eps)\bigl(\eps^{-\frac12}h^{\ell-2}\|\psi^\eps(t_m)\|_{H^\ell}
+\|\alpha^\eps(t_m)-\alpha^m_h\|_\lt\bigr),\end{align*}
and \eqref{errorrelation3} holds with $C_3(\eps)=C_2(\eps)\eps^{-\frac12}$ and $C_4(\eps)=C_2(\eps)\eps^{-\frac12}.$
Furthermore, we only require $\|\alpha^\eps(t_m)-\alpha^m_h\|_\lt=O({\rm min}\{\eps^2,C_s^{-1}(\eps)\eps\})$.

\end{remark}

\section{Error analysis for Algorithm 1}\label{sec-5}
In this section we shall derive error estimates for the solution of
Algorithm 1. This will be done by using an inductive argument
based on the error estimates of the previous section.
Before stating our first main result of this section, we cite the 
well-known error estimate results for the elliptic projection of $\alpha(t_m)$,
which we denote by $\chi_h^m\in W_0^h$. Let
$\omega^m(\cdot):=\alpha(\cdot,t_m)-\chi_h^m(\cdot)$, then there hold the following 
estimates for $\alpha(t_m)-\chi_h^m, m \ge 1$ (cf. \cite{Brenner, Ciarlet78}):
\begin{align}\label{projbounds1}
\|\omega\|_{L^2([0,T];L^2)}+ h\|\omega\|_{L^2([0,T];H^1)}
&\le Ch^{j}\|\alpha\|_{L^2([0,T];H^j)},\\
\nonumber\|\omega_t\|_{L^2([0,T];L^2)}+h\|\omega_t\|_{L^2([0,T];H^1)}&\le
Ch^j\|\alpha_t\|_{L^2([0,T];H^j)},\\
\nonumber \|\omega^m\|_{W^{1,\infty}}&\le C
h^{j-1}\|\alpha(t_m)\|_{W^{j,\infty}},
\end{align}
where $j:={\rm min}\{k+1,p\}$.  As in Section \ref{sec-4}, we set
$\ell={\rm min}\{r+1,s\}$.  

\begin{thm}\label{maintheorem} 
There exists $h_3>0$ such that for $h\le {\rm min}\{h_1,h_2,h_3\}$ 
there exists $\Delta t_1>0$ such that for
$\Delta t\le {\rm min}\{\Delta t_1,h^2\}$ 
\begin{align}\label{L2boundthm}
\max_{0\le m\le M}\|\alpha^\eps(t_{m})-\alpha^{m}_h\|_\lt
&\le C_5(\eps)\Bigl\{\Delta t \|\alpha_{\tau\tau}^\eps\|_{L^2([0,T] \times \bR^3)}\\
&\nonumber\hspace{0.6in}+h^j\bigl[\|\alpha^\eps\|_{L^2([0,T];H^j)}
+\|\alpha^\eps_t\|_{L^2([0,T];H^j)}\bigr]\\
&\nonumber\hspace{0.6in}+C_6(\eps)h^{\ell}\|\psi^\eps\|_{L^2([0,T];H^\ell)}\Bigr\},\\
\label{H2bound2thm}\max_{0\le m\le M}\|\psi^\eps(t_{m})-\psi^{m}_h\|_{H^2}
&\le C_7(\eps)\Bigl\{\Delta t \|\alpha_{\tau\tau}^\eps\|_{L^2([0,T]\times \bR^3)}\\
&\nonumber\hspace{0.6in}+h^j\bigl[\|\alpha^\eps\|_{L^2([0,T];H^j)}
+\|\alpha^\eps_t\|_{L^2([0,T];H^j)}\bigr]\\
&\nonumber\hspace{0.6in}+C_6(\eps)h^{\ell-2}\|\psi^\eps\|_{L^2([0,T];H^\ell)}\Bigr\},\\
\label{H1bound2thm}\max_{0\le m\le M}\|\psi^\eps(t_{m})-\psi^{m}_h\|_{H^1}
&\le C_8(\eps)\Bigl\{\Del t \|\alpha_{\tau\tau}^\eps\|_{L^2([0,T] \times \bR^3)}\\
&\nonumber\hspace{0.6in}+h^{j}\bigl[\|\alpha^\eps\|_{L^2([0,T];H^j)}
+\|\alpha^\eps_t\|_{L^2([0,T];H^j)}\bigr]\\
&\nonumber\hspace{0.6in}+C_6(\eps)h^{\ell-1}\|\psi^\eps\|_{L^2([0,T];H^\ell)}\Bigr\},
\end{align} 
where $C_5(\eps)=O(\eps^{-1})$, $C_6(\eps)=C_s(\eps)C_3(\eps)$,
$C_7(\eps)=C_2(\eps)C_5(\eps)$, $C_8(\eps)=C_s(\eps)C_5(\eps)$, and $C_s(\eps)$ is defined in Theorem \ref{errorrelation2thm}.
\end{thm}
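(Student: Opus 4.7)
The proof will proceed by strong induction on the time index $m$, simultaneously maintaining the three bounds \eqref{L2boundthm}--\eqref{H1bound2thm} at each step. The base case $m=0$ is immediate from the choice of $\alpha_h^0$ as the elliptic projection of $\alpha_0$, since \eqref{projbounds1} gives $\|\alpha^\eps(0)-\alpha_h^0\|_\lt\le Ch^j\|\alpha_0\|_{H^j}$, and \eqref{geomethod1} at $m=0$ with this datum falls under the framework of Remark \ref{uboundremark}. For the inductive step, the key observation is that the two error estimates decouple as follows: \emph{given} a good $L^2$ bound on $\alpha^\eps(t_m)-\alpha_h^m$, Remark \ref{uboundremark} directly supplies the existence of $\psi_h^m$ together with the claimed $H^2$ and $H^1$ bounds \eqref{H2bound2thm}--\eqref{H1bound2thm}; thus the crux is to advance \eqref{L2boundthm} from step $m$ to step $m+1$.

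To advance the transport error, I decompose $\alpha^\eps(t_{m+1})-\alpha_h^{m+1}=\omega^{m+1}+\theta^{m+1}$ with $\omega^{m+1}:=\alpha^\eps(t_{m+1})-\chi_h^{m+1}$ the elliptic projection error and $\theta^{m+1}:=\chi_h^{m+1}-\alpha_h^{m+1}\in W_0^h$. Using \eqref{semgeo2a} together with the Taylor expansion \eqref{taylor} at time $t_{m+1}$ and subtracting \eqref{geomethod2}, one obtains the error equation
\begin{align*}
\bigl(\theta^{m+1}-\overline{\theta}^{\,m}, w_h\bigr)
&=\bigl(\overline{\omega}^{\,m}-\omega^{m+1}, w_h\bigr)
+\Delta t\bigl(R^{m+1},w_h\bigr)\\
&\quad +\bigl(\chi_h^m(\bar{x})-\chi_h^m(\bar{x}_h),w_h\bigr)
\qquad\forall w_h\in W_0^h,
\end{align*}
where $\bar{x}=x-\bv^\eps(x,t_{m+1})\Delta t$ uses the exact velocity, $\bar{x}_h$ is as in Algorithm 1, and $\|R^{m+1}\|_\lt\le C\Delta t\|\alpha^\eps_{\tau\tau}\|_{L^2([t_m,t_{m+1}]\times\bR^3)}$ by \eqref{taylor}. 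The critical term $\chi_h^m(\bar{x})-\chi_h^m(\bar{x}_h)$ is bounded by $\|\nab\chi_h^m\|_{L^\infty}\|\bv^\eps(t_{m+1})-\bv_h^m\|_{L^\infty}\Delta t$, which via an inverse inequality on $\bv_h^m$ and the inductive $H^1$ bound on $\psi^\eps(t_m)-\psi_h^m$ from Remark \ref{uboundremark} is of acceptable order. Testing with $w_h=\theta^{m+1}$ and exploiting the volume-preserving property $\Div\bv_h^m=0$ (which holds exactly by construction of $\bv_h^m$) to obtain $\|\overline{\theta}^{\,m}\|_\lt\le (1+C\Delta t)\|\theta^m\|_\lt$ through a change-of-variables Jacobian computation, followed by summation over $m$ and a discrete Gronwall inequality, yields the $L^2$ estimate on $\theta^{m+1}$ and hence \eqref{L2boundthm} at step $m+1$.

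The main obstacle is closing the induction loop: the bound \eqref{L2boundthm} at step $m+1$ must still lie within the smallness window $O(\min\{\eps^3,\eps^2 h^{3/2},C_s^{-1}(\eps)\eps^2\})$ required by Remark \ref{uboundremark} so that the Monge--Amp\`ere solver at step $m+1$ remains well-posed and its error bounds remain valid. This is precisely the role of the additional mesh constraint $h\le h_3$ and the time-step constraints $\Delta t\le\min\{\Delta t_1,h^2\}$: the former is chosen so that the right-hand sides of \eqref{L2boundthm}--\eqref{H1bound2thm} satisfy the required smallness, while the coupling $\Delta t\le h^2$ neutralizes the $\|\nab\chi_h^m\|_{L^\infty}\le Ch^{-1}$ factor that arises when controlling the $\bar{x}$ versus $\bar{x}_h$ discrepancy, so that after multiplication by $\Delta t$ the contribution of the $\psi$-error feeding into the next $\alpha$-error is absorbed into the constants $C_5(\eps),\dots,C_8(\eps)$. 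Once the induction step is secured, \eqref{H2bound2thm} and \eqref{H1bound2thm} at step $m+1$ follow at once from Remark \ref{uboundremark} applied with the just-established bound \eqref{L2boundthm}.
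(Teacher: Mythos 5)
Your overall strategy — strong induction; using Remark~\ref{uboundremark} to convert an $L^2$ bound on the $\alpha$-error into the $H^2$ and $H^1$ bounds on the $\psi$-error; decomposing $\alpha^\eps-\alpha_h$ via the elliptic projection; deriving the discrete transport error equation from \eqref{taylor} and \eqref{geomethod2}; treating the advected discrete error by a change-of-variables Jacobian computation for $F_m(x)=x-\Delta t\,\bv_h^m(x)$ using $\Div\bv_h^m=0$; summing and applying discrete Gronwall; and finally verifying that the resulting bound stays inside the smallness window so the Monge--Amp\`ere step remains well-posed — matches the paper's proof (their $\xi^m$ is your $-\theta^m$, and your grouping $\chi_h^m(\bar x)-\chi_h^m(\bar x_h)$ splits into the paper's two pieces since $\chi_h^m=\alpha^\eps(t_m)-\omega^m$).

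Two bounds are, however, stated incorrectly and would derail the rate if carried through as written. First, the velocity-discrepancy term must place the velocity error in $L^2$, not $L^\infty$: the correct estimate is
\[
\|\chi_h^m(\bar x)-\chi_h^m(\bar x_h)\|_\lt
\le C\,\Delta t\,\|\nab\chi_h^m\|_{L^\infty}\,\|\bv^\eps(t_m)-\bv_h^m\|_\lt,
\]
exactly as in \eqref{bound1b} and \eqref{bound2c}. Recovering $\|\bv^\eps-\bv_h^m\|_{L^\infty}$ from the $L^2$ bound by an inverse inequality costs $h^{-d/2}$, which the restriction $\Delta t\le h^2$ does not neutralize (in $d=3$ it loses $h^{3/2}$ against the target $h^\ell$). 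Also, $\|\nab\chi_h^m\|_{L^\infty}=O(\eps^{-1})$ via \eqref{projbounds1} and \eqref{bounds1}, not $O(h^{-1})$, so your stated rationale for $\Delta t\le h^2$ is off: its real role is that the Gronwall step leaves $\sqrt{\Delta t}$ prefactors on $\|\omega\|_{L^2([0,T];H^1)}$ and on the $\psi$-error term, and $\sqrt{\Delta t}\le h$ is what upgrades $\sqrt{\Delta t}\,h^{j-1}\to h^j$ and $\sqrt{\Delta t}\,h^{\ell-1}\to h^\ell$. Second, because $\Div\bv_h^m=0$ the Jacobian of $F_m$ is $1+\Delta t^2\det(D\bv_h^m)$, so the change of variables gives $\|\overline\theta^m\|_\lt^2=(1+C\eps^{-2}\Delta t^2)\|\theta^m\|_\lt^2$ rather than a factor $(1+C\Delta t)$; the $\eps^{-2}$ matters for tracking $C_5(\eps),\dots,C_8(\eps)$ and for the admissible size of $\Delta t_1$.
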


\begin{proof}
We break the proof into five steps.

{\em Step 1:}  The proof is based on two induction hypotheses, where
we assume for $m=0,1,\cdots,k,$
\begin{align}\label{induction1}
\|\alpha^\eps(t_m)-\alpha^m_h\|_\lt
&=O\bigl({\rm min}\{\eps^3,\eps^2h^{\frac32},C^{-1}_s(\eps)\eps^2\}\bigr),\\
\label{induction2}\|D^2\psi^m_h\|_{L^\infty}&=O(\eps^{-1}).
\end{align}

We first show that the claims of the theorem hold when $k=0$. Let
\[
h_4=O\left({\rm min}\left\{\left(\frac{\eps^3}{\|\alpha_0\|_{H^j}}
\right)^{\frac{1}{j}},\left(\frac{\eps^2}{\|\alpha_0\|_{H^j}}
\right)^{\frac{2}{2j-3}},\left(\frac{\eps^2}{C_s(\eps)\|\alpha_0\|_{H^j}}
\right)^{\frac{1}{j}}\right\}\right).
\]
From \eqref{projbounds1} we have for $h\le h_4$
\begin{align*}
\|\alpha_0-\alpha^0_h\|_\lt \le Ch^{j}\|\alpha_0\|_{H^j}
\le C{\rm min}\{\eps^3,\eps^2h^{\frac32},C^{-1}_s(\eps)\eps^2\}.
\end{align*}

By Remark \ref{uboundremark}, there exists $\psi^0_h$ solving
\eqref{geomethod1}.  On noting that
$h_1\le C \left(\frac{\eps^2}{\|\psi^\eps(0)\|_{H^\ell}}\right)^{\frac{2}{2\ell-7}}$,
we have for $h\le {\rm min}\{h_1,h_2,h_4\}$
\begin{align*}\|D^2\psi^0_h\|_{L^\infty}&\le
\|D^2\psi^\eps(0)\|_{L^\infty}+h^{-\frac32}\|D^2\psi^\eps(0)-D^2\psi^0_h\|_\lt\\
&\le C\Bigl(\eps^{-1}+h^{-\frac32}C_2(\eps)\bigl(\eps^{-2}h^{\ell-2}
\|\psi^\eps(0)\|_{H^\ell}+h^j\|\alpha^\eps_0\|_{H^j}\bigr)\Bigr)
\le C\eps^{-1}.
\end{align*}

The remaining four steps are devoted to show that the estimates hold for $m=k+1$.  

\medskip
{\em Step 2:}
Let $\xi^m:=\alpha_h^m-\chi_h^m$. By \eqref{geomethod2} and \eqref{intro5}, 
and a direct calculation we get
\begin{align}\label{start}
\bigl(\xi^{m+1}-\oxi^{m},\xi^{m+1}\bigr)&= \bigl(\Delta t\,
\alpha_\tau^\eps(t_{m+1})-(\alpha^\eps(t_{m+1})
-\oalpha^\eps_h(t_{m})),\xi^{m+1}\bigr)\\
&\nonumber\hspace{0.4in}+\bigl(\omega^{m+1}-\oomega^{m}_h,\xi^{m+1}\bigr),
\end{align}
where $\oxi^{m}:= \xi^{m}(\bar{x}_h)$, 
$\oalpha^\eps_h(t_{m})):= \alpha^\eps(\bar{x}_h,t_{m})$, and
$\oomega^{m}_h:=\omega^{m}(\bar{x}_h)$.

We now estimate the right-hand side of \eqref{start}. To bound the first term, 
we write
\begin{align*}
&\Del t\, \alpha^\eps_t(x,t_{m+1})-\bigl[\alpha^\eps(x,t_{m+1})
-\alpha^\eps(\bar{x}_h,t_{m})\bigr]\\ 
&\hspace{0.3in}=\Delta t\,
\alpha^\eps_t(x,t_{m+1})-\bigl[\alpha^\eps(x,t_{m+1})
-\alpha^\eps(\bar{x},t_{m})\bigr]+\bigl[\alpha^\eps(\bar{x}_h,t_{m})
-\alpha^\eps(\bar{x},t_{m})\bigr].
\end{align*}
Using the identity
\begin{align*}
\Del t\,\alpha^\eps_\tau(x,t_{m+1}) &-\bigl[\alpha^\eps(x,t_{m+1})
-\alpha^\eps(\bar{x},t_{m})\bigr] \\
&=\int_{(\bar{x},t_{m})}^{(x,t_{m+1})} \sqrt{|x(\tau)-\bar{x}|^2
+(t(\tau)-t_{m})^2}\ \alpha^\eps_{\tau\tau}d\tau
\end{align*}
and \eqref{bounds} we obtain
\begin{align}\label{bound1a}
\|\Del t\, \alpha^\eps_\tau(t_{m+1}) &-\bigl[\alpha^\eps(t_{m+1})
-\oalpha^\eps(t_{m})\bigr]\|_\lt^2\\
&=\int_{\bR^3}\Bigl|\int_{(\bar{x},t_{m})}^{(x,t_{m+1})}
\sqrt{|x(\tau)-\bar{x}|^2+(t(\tau)-t_{m})^2}\ \alpha^\eps_{\tau\tau}d\tau\Bigr|^2 dx
\nonumber \\
&\le \Delta t\int_{\bR^3} \sqrt{|\bv^\eps(t_{m+1})|^2+1} \Bigl|
\int_{(\bar{x},t_{m})}^{(x,t_{m+1})} \alpha^\eps_{\tau\tau}d\tau \Bigr|^2 dx 
\nonumber\\
&\le C\Del t^2\|\bv^\eps(t_{m+1})\|_{L^\infty}
\int_{\mathbb{R}^3}\int_{(\bar{x},t_{m})}^{(x,t_{m+1})}
\bigl|\alpha^\eps_{\tau\tau}\bigr|^2d\tau dx \nonumber\\
&\le C\Delta t^2 \|\alpha^\eps_{\tau\tau}\|^2_{L^2([t_{m},t_{m+1}]
\times \bR^3)}, \nonumber
\end{align}
where $\oalpha^\eps(t_{m}):=\alpha^\eps(\bar{x},t_{m})$.  Since
\begin{align*}
\alpha^\eps(\bar{x}_h,t_{m})-\alpha^\eps(\bar{x},t_{m})
=\int_0^1 D\alpha^\eps(\bar{x}_h+s(\bar{x}-\bar{x}_h),t_m)
\cdot (\bar{x}-\bar{x}_h)ds,
\end{align*}
then
\begin{align}\label{bound1b}
\|\oalpha^\eps_h(t_{m}) &-\oalpha^\eps(t_{m})\|_\lt^2\\
&\nonumber =\Delta t^2 \int_{\bR^3}
\Bigl|\int_0^1 D\alpha^\eps(\bar{x}_h+s(\bar{x}-\bar{x}_h),t_{m})\cdot
(\bv_h^{m}-\bv^\eps(t_{m})) ds\Bigr|^2 dx\\
&\nonumber  \le \Delta
t^2\|\alpha^\eps(t_{m})\|_{W^{1,\infty}}^2\|\bv_h^{m} -\bv^\eps(t_{m})\|_\lt^2\\
&\nonumber \le C\eps^{-2}\Del t^2\|\bv_h^m-\bv^\eps(t_{m})\|_\lt^2.
\end{align}

Using \eqref{bound1a}-\eqref{bound1b}, we can bound the first
term on the right-hand side of \eqref{start} as follows:
\begin{align}\label{firstbound}
&\bigl(\Delta t\, \alpha_\tau^\eps(t_{m+1})-\bigl[\alpha^\eps(t_{m+1})
-\oalpha^\eps_h(t_{m})\bigr],\xi^{m+1}\bigr)\\
&\hspace{0.5in}\nonumber 
\le C\Delta t^2 \bigl(\|\alpha_{\tau\tau}\|_{L^2([t_{m},t_{m+1}]
\times\bR^3)}^2+\eps^{-2}\|\bv_h^{m}-\bv^\eps(t_{m})\|_\lt^2\bigr)
+\frac18\|\xi^{m+1}\|_\lt^2.
\end{align}

To bound the second term on the right-hand side of \eqref{start}, writing
\begin{align*}
\omega^{m+1}(x) &-\omega^{m}(\bar{x}_h)\\
&=\bigl(\omega^{m+1}(x)-\omega^{m}(x)\bigr)
+\bigl(\omega^{m}(x)-\omega^{m}(\bar{x})\bigr)
+\bigl(\omega^{m}(\bar{x})-\omega^{m}(\bar{x}_h)\bigr),
\end{align*}
we then have
\begin{align}\label{bound2a}
\|\omega^{m+1}-\omega^{m}\|_\lt^2 
&\le \Delta t \|\omega_t\|_{L^2([t_{m},t_{m+1}]\times \bR^3)}^2.
\end{align}
Next, it follows from
\begin{align*}
\omega^{m}(x)-\omega^{m}(\bar{x}) 
&=\Delta t \int_0^1 D\omega^{m}(x+s(\bar{x}-x))\cdot \bv^\eps(t_{m})ds
\end{align*}
that (set $\oomega^{m}:=\omega^{m}(\bar{x})$)
\begin{align}\label{bound2b}
\|\omega^{m}-\oomega^{m}\|_\lt^2
&\le C\Del t^2\|\bv^\eps(t_{m})\|_{L^\infty}^2\|\omega^{m}\|_{H^1}^2
\le C\Del t^2\|\omega^{m}\|_{H^1}^2.
\end{align}
Finally, using the identity
\begin{align*}
\omega^{m}(\bar{x})-\omega^{m}(\bar{x}_h)
=\Delta t \int_0^1 D\omega^{m}(\bar{x}+s(\bar{x}_h-\bar{x}))
\cdot(\bv^\eps(t_{m})-\bv^{m}_h) ds
\end{align*}
we get
\begin{align}\label{bound2c}
\|\oomega^{m}-\oomega^{m}_h\|_\lt^2
&\le C\Del t^2 \|\omega^{m}\|_{W^{1,\infty}}^2\|\bv^\eps(t_{m})-\bv^{m}_h\|_\lt^2\\
&\nonumber \le C\Del t^2 \|\bv^\eps(t_{m})-\bv^{m}_h\|_\lt^2.
\end{align}

Combining \eqref{bound2a}--\eqref{bound2c}, we then bound the second
term on the right-hand side of \eqref{start} as follows:
\begin{align}\label{secondbound}
\bigl(\omega^{m+1}-\oomega^{m}_h,\xi^{m+1}\bigr)
&\le C\Bigl(\Delta t \|\omega_t\|_{L^2([t_{m},t_{m+1}]\times
\bR^3)}^2+\Delta t^2\|\omega^{m}\|_{H^1}^2\\
&\nonumber\hspace{0.4in}
+\Delta t^2 \|\bv^\eps(t_{m})-\bv^{m}_h\|_\lt^2\Bigr)
+\frac18 \|\xi^{m+1}\|_\lt^2.
\end{align}

{\em Step 3:} To get a lower bound of
$(\xi^{m+1}-\oxi^{m},\xi^{m+1})$, let 
$F_m(x):=x-\Delta t \bv^m_h(x)$.  We then have
\[
\det(J_{F_m})=1+\Delta t^2\left(1+\psi_{x_1x_1}^m
\psi_{x_2x_2}^m-(\psi_{x_1x_2}^m)^2-(\psi_{x_1x_1}^m+\psi_{x_2x_2}^m)\right),
\]
where $J_{F_m}$ denotes the Jacobian of $F_m$, and we have omitted
the subscript $h$ for notational convenience.  Letting $\Del t_0=O(\eps)$, 
we can conclude from the induction hypotheses that for $\Del t\le \Del t_0$, 
$F_m$ is invertible and $\det(J_{F_m^{-1}})=1+C\eps^{-2}\Delta t^2$. 
From this result we get
\begin{align}\label{transform}
\|\oxi^{m}\|_\lt^2=(1+\eps^{-2}\Delta t^2)\|\xi^{m}\|_\lt^2.
\end{align}
Thus,
\begin{align}\label{lowerbound}
(\xi^{m+1}-\oxi^{m},\xi^{m+1})
&\ge \frac12 \bigl[(\xi^{m+1},\xi^{m+1})-(\oxi^{m},\oxi^{m})\bigr]
\\
\nonumber &=\frac12\bigl(\|\xi^{m+1}\|_\lt^2
-(1+\eps^{-2}\Delta t^2)\|\xi^{m}\|_\lt^2\bigr).
\end{align}

{\em Step 4:} Combining \eqref{start}, \eqref{firstbound}, 
\eqref{secondbound}, \eqref{lowerbound}, and using the induction 
hypotheses and Remark \ref{uboundremark} yield
\begin{align*}
&\|\xi^{m+1}\|_\lt^2-\|\xi^{m}\|_\lt^2\\
&\nonumber\hspace{0.6in}\le C\eps^{-2}\Bigl\{\Delta
t^2\Bigl(\|\alpha^\eps_{\tau\tau}\|_{L^2([t_{m},t_{m+1}]\times\bR^3)}^2
+ \|\omega^{m}\|_{H^1}^2 +\|\bv_h^{m}-\bv^\eps(t_{m})\|_\lt^2\Bigr)\\
&\nonumber\hspace{.8in}
+\Delta t \|\omega_t\|_{L^2([t_{m},t_{m+1}]\times \bR^3)}^2
+\Delta t^2\|\xi^{m}\|_\lt^2\Bigr\}\\
&\nonumber\hspace{0.6in} 
\le C\eps^{-2}\Bigl\{\Delta t^2\Bigl(\|\alpha^\eps_{\tau\tau}\|_{L^2([t_{m},t_{m+1}]
\times\bR^3)}^2+ \|\omega^{m}\|_{H^1}^2\\
&\nonumber\hspace{.8in}
+C^2_s(\eps)\bigl(C^2_3(\eps)h^{2\ell-2}\|\psi^\eps(t_m)\|^2_{H^\ell}
+(C^2_4(\eps)h^2+1)\|\alpha^\eps(t_m)-\alpha_h^m\|^2_{H^{-2}}\bigr)\Bigr)\\
&\nonumber\hspace{.8in}
+\Delta t \|\omega_t\|_{L^2([t_{m},t_{m+1}]\times \bR^3)}^2
+\Delta t^2\|\xi^{m}\|_\lt^2\Bigr\}.
\end{align*}

It follows from the inequality 
\[
\|\alpha^\eps(t_m)-\alpha^m_h\|_{H^{-2}}\le \|\alpha^\eps(t_m)-\alpha^m_h\|_\lt
\le \|\xi^m\|_\lt+\|\omega^m\|_\lt
\]
that
\begin{align*}
\|\xi^{m+1}\|_\lt^2-\|\xi^{m}\|_\lt^2
&\le C\eps^{-2} \Bigl\{\Delta t^2 \Bigl(\|\alpha^\eps_{\tau\tau}
\|_{L^2([t_{m},t_{m+1}]\times\bR^3)}^2+(C_4^2(\eps)h^2+1)\|\omega^{m}\|_{H^1}^2\\
&\nonumber\quad
+C^2_s(\eps)C_3^2(\eps)h^{2\ell-2}\|\psi^\eps(t_m)\|_{H^\ell}^2\Bigr)
+\Del t \|\omega_t\|^2_{L^2([t_m,t_{m+1}]\times \bR^3)}\\
&\nonumber\quad
+(C^2_4(\eps)h^2+1)\Del t^2\|\xi^m\|_\lt^2\Bigr\}.
\end{align*}

Applying the summation operator $\sum_{m=0}^k$ and noting that $\xi^0=0$ we get
\begin{align*}
\|\xi^{k+1}\|_\lt^2&\le C\eps^{-2} \Bigl\{\Delta t^2
\|\alpha^\eps_{\tau\tau}\|_{L^2([0,T]\times\bR^3)}^2
+\Del t\Bigl[(C^2_4(\eps)h^2+1)\|\omega\|_{L^2([0,T];H^1)}^2\\
&\nonumber\hspace{1in}
+C^2_s(\eps)C_3^2(\eps)h^{2\ell-2}\|\psi^\eps\|_{L^2([0,T];H^\ell)}^2
+\|\omega_t\|^2_{L^2([0,T]\times \bR^3)}\Bigr]\\
&\nonumber\hspace{1in}
+\Del t^2(C^2_4(\eps)h^2+1)\sum_{m=0}^k\|\xi^m\|_\lt^2\Bigr\},
\end{align*}
which by an application of the discrete Gronwall inequality yield
\begin{align}\label{Gronwall}
\|\xi^{k+1}\|_\lt &\le C\eps^{-1}\left(1+\eps^{-1}(C_4(\eps)h+1)
\Del t\right)^{k+1}\Bigl\{\Del t\|\alpha^\eps_{\tau\tau}\|_{L^2([0,T] \times\bR^3)}\\
\nonumber&\hspace{0.25in}
+\sqrt{\Del t}\Bigl[(C_4(\eps)h+1)\|\omega\|_{L^2([0,T];H^1)}\\
&\nonumber\hspace{0.6in}
+C_s(\eps)C_3(\eps)h^{\ell-1}\|\psi^\eps\|_{L^2([0,T];H^\ell)}
+\|\omega_t\|_{L^2([0,T]\times \bR^3)}\Bigr]\Bigr\}.
\end{align}

We note $h_1=O(C_4^{-1}(\eps))=O(\eps^{\frac32})$.  Thus, for 
$h\le {\rm min}\{h_1,h_2,h_4\}$ and $\Delta t\le {\rm min}\{\Delta
t_0,h^2\}$, we have from \eqref{Gronwall}, the triangle inequality,
and \eqref{projbounds1} that
\begin{align}\label{L2bound}
\|\alpha^\eps(t_{k+1}) -\alpha^{k+1}_h\|_\lt 
& \le C_5(\eps)\Bigl\{\Delta t \|\alpha_{\tau\tau}^\eps\|_{L^2([0,T]
\times \bR^3)}+h^j\bigl[\|\alpha^\eps\|_{L^2([0,T];H^j)}
\\
&\qquad +\|\alpha^\eps_t\|_{L^2([0,T];H^j)}\bigr]
+C_6(\eps)h^{\ell}\|\psi^\eps\|_{L^2([0,T];H^\ell)}\Bigr\}. \nonumber
\end{align}

Thus, by Remark \ref{uboundremark}, we obtain the following estimates:
\begin{align}\label{H2bound2}
\|\psi^\eps(t_{k+1})-\psi^{k+1}_h\|_{H^2}
&\le C_7(\eps)\Bigl\{\Del t \|\alpha_{\tau\tau}^\eps\|_{L^2([0,T]
\times \bR^3)}+h^j\bigl[\|\alpha^\eps\|_{L^2([0,T];H^j)} \\
&\quad +\|\alpha^\eps_t\|_{L^2([0,T];H^j)}\bigr] 
+C_6(\eps)h^{\ell-2}\|\psi^\eps\|_{L^2([0,T];H^\ell)}\Bigr\}, \nonumber\\
\label{H1bound2}\|\psi^\eps(t_{k+1})-\psi^{k+1}_h\|_{H^1}
&\le C_8(\eps)\Bigl\{\Delta t \|\alpha_{\tau\tau}^\eps\|_{L^2([0,T]
\times \bR^3)}+h^{j}\bigl[\|\alpha^\eps\|_{L^2([0,T];H^j)} \\
&\quad +\|\alpha^\eps_t\|_{L^2([0,T];H^j)}\bigr]
+C_6(\eps)h^{\ell-1}\|\psi^\eps\|_{L^2([0,T];H^\ell)}\Bigr\}. \nonumber
\end{align}

{\em Step 5:}  We now verify the induction hypotheses.  Set
\begin{align*}
C_9(\eps)&=\eps^2{\rm min}\{\eps,C^{-1}_s(\eps)\},\\
C_{10}(\eps)&=C_5(\eps)(\|\alpha^\eps\|_{L^2([0,T];H^j)}
+\|\alpha^\eps_t\|_{L^2([0,T];H^j)}),\\
C_{11}(\eps) &=C_5(\eps)C_6(\eps)\|\psi^\eps\|_{L^2([0,T];H^\ell)},
\end{align*}
and let
\begin{align*}
h_5&=O\Bigl({\rm min}\Bigl\{\left(\frac{C_9(\eps)}{C_{11}(\eps)}
\right)^{\frac{1}{\ell}},\left(\frac{\eps^2}{C_{11}(\eps)}\right)^{\frac{2}{2\ell-3}}
\left(\frac{C_9(\eps)}{C_{10}(\eps)}\right)^{\frac{1}{j}},
\left(\frac{\eps^2}{C_{10}(\eps)}\right)^{\frac{2}{2j-3}}\Bigr\}\Bigr),\\
\Delta t_1&=O\left(\frac{{\rm min}\{C_9(\eps),\eps^2h^{\frac32}\}}{C_5(\eps)
\|\alpha_{\tau\tau}^\eps\|_{L^2([0,T]\times \bR^3)}}\right).
\end{align*}

On noting that $\Delta t_1\le \Delta t_0$, it follows from \eqref{L2bound}
that for $h\le {\rm min}\{h_1,h_2,h_4,h_5\}$ and 
$\Delta t\le {\rm min}\{\Delta t_1,h^2\}$
\begin{align*}
\|\alpha^\eps(t_{k+1})-\alpha^{k+1}_h\|_\lt 
\le C{\rm min}\{\eps^3,\eps^2h^\frac32,C^{-1}_s(\eps)\}.
\end{align*}
Thus, the first induction hypothesis \eqref{induction1} holds.

Finally, let 
\[
h_6=O\left(\frac{\eps}{C_6(\eps)C_7(\eps)\|\psi^\eps\|_{L^2([0,T];H^\ell)}}
\right)^{\frac{2}{2\ell-7}},
\]
by the definitions of $h_5,h_7$ and $\Delta t_1$, \eqref{H2bound2}, 
\eqref{bounds}, and the inverse inequality we have
for $h\le {\rm min}\{h_1,h_2,h_4,h_5,h_6\}$ and
$\Delta t\le {\rm min}\{\Delta t_1,h^2\}$
\begin{align*}
\|D^2\psi^{k+1}_h\|_{L^\infty}
&\le \|D^2\psi^\eps(t_{k+1})\|_{L^\infty}
+Ch^{-\frac32}\|D^2\psi^\eps(t_{k+1})-D^2\psi^{k+1}_h\|_\lt\\
&\le C\eps^{-1}+h^{-\frac32}C_7(\eps)\Bigl\{\Del t 
\|\alpha_{\tau\tau}^\eps\|_{L^2([0,T]\times \bR^3)}\\
&\quad
+h^j\bigl[\|\alpha^\eps\|_{L^2([0,T];H^j)}
+\|\alpha^\eps_t\|_{L^2([0,T];H^j)}\bigr]
+C_6(\eps)h^{\ell-2}\|\psi^\eps\|_{L^2([0,T];H^\ell)}\Bigr\}\\
&\le C\eps^{-1}.
\end{align*}
Therefore, the second induction hypothesis \eqref{induction2} holds,
and the proof is complete by setting $h_3={\rm min}\{h_1,h_2,h_4,h_5,h_6\}$.
\end{proof}

\begin{remark}
In the two dimensional case,
\begin{align*}\Delta t_1=O\left(\frac{{\rm min}\{ \eps^2,C_s^{-1}(\eps)\eps\}}{C_5(\eps)\|\alpha^\eps_{\tau\tau}\|_{L^2([0,T]\times \mathbb{R}^2)}}\right).\end{align*}
\end{remark}

\begin{remark}
Recalling the definitions of $V^h$ and $W^h$, we require $k\ge r-2$ in order 
to obtain optimal order error estimate for $\psi_h^m$ in the $H^2$-norm.
\end{remark}

\section{Numerical experiments}\label{sec-6}
In this section we shall present three $2$-d numerical experiments.
The first two experiments are done on the domain $U=(0,1)^2$, while
the third experiment uses $U=(0,6)^2$. In all three experiments
the fifth degree Argyris plate finite element (cf. \cite{Ciarlet78}) is
used to form $V^h$, and the cubic Lagrange element is employed   
to form $W^h$. We recall that (see Section \ref{sec-2}) the $2$-d geostrophic
flow model has the exact same form as \eqref{intro1}--\eqref{vdef}
except $\bv$ and $\bv^\vepsi$ in \eqref{vdef} and \eqref{vepsdef} 
are replaced respectively by
\[
\bv =(\psi^*_{x_2}-x_2,x_1-\psi^*_{x_1}),
\qquad
\bv^\eps =(\psi^\eps_{x_2}-x_2,x_1-\psi^\eps_{x_1}).
\]

\subsection{Test 1} 
The purpose of this test is twofold. First, we compute 
$\alpha^m_h$ and $\psi^m_h$ to view certain properties of these two 
functions.  Specifically, we want to verify $\alpha^m_h>0$ and
that $\psi^m_h$ is strictly convex for $m=0,1,...,M$. Second,
we calculate $\|\psi^*-\psi^\eps_h\|$ and
$\|\alpha-\alpha^\eps_h\|$ for fixed $h=0.023$ and $\Delta t=0.0005$ in order to
approximate $\|\psi^*-\psi^\eps\|$ and $\|\alpha-\alpha^\eps\|$.
We set to solve problem \eqref{geomethod1}--\eqref{geomethod2} with the 
right-hand side of \eqref{geomethod2} being replaced by $(F,w_h)$, and 
$V^h_1$ and $W^h_0$ being replaced by $V^h_{g_N}$ and $W^h_{g_D}$, 
respectively, where
\begin{align*}
V^h_{g_N}(t) &:=\Bigl\{ v_h\in V^h;\ \normd{v_h}\Big|_{\partial U}=g_N, \ 
(v_h,1)=c(t) \Bigr\},\qquad c(t):=(\psi^*,1), \\
W^h_{g_D}(t) &:=\{w_h\in W^h;\ w_h\big|_{\partial U}=g_D\}.
\end{align*}

We use the following test functions and parameters
\begin{align*}
g_N(x,t) &=te^{t(x_1^2+x_2^2)/2}(x_1\nu_{x_1}+x_2\nu_{x_2}),\\
g_D(x,t) &=t^2(1+t(x_1^2+x_2^2))e^{t(x_1^2+x_2^2)},\\
F(x,t) &=t\bigl(2+4t(x_1^2+x_2^2)+t^2(x_1^2+x_2^2)^2\bigr)e^{t(x_1^2+x_2^2)},
\end{align*}
so that the exact solution of \eqref{intro1}--\eqref{vdef} is given by
\[
\psi^*(x,t)=e^{t(x_1^2+x_2^2)/2},\qquad
\alpha(x,t)=t^2(1+t(x_1^2+x_2^2))e^{t(x_1^2+x_2^2)}.
\]

We record the computed solutions and plot the errors versus $\eps$ 
in Figure \ref{figtest1} at $t_m=0.25$.  The figure shows that 
$\|\psi^*(t_m)-\psi^m_h\|_{H^2}=O(\eps^\frac14)$, and since we have set
both $h$ and $\Delta t$ very small, these results suggest that
$\|\psi^*(t_m)-\psi^\eps(t_m)\|_{H^2}=O(\eps^\frac14)$. Similarly, 
we argue $\|\psi^*(t_m)-\psi^\eps(t_m)\|_{H^1}=O(\eps^\frac34)$ 
and $\|\psi^*(t_m)-\psi^\eps(t_m)\|_\lt=O(\eps)$ based on our results.  
We note that these are the same convergence results found 
in \cite{Feng2,Feng3,Feng4,Neilan_08}, where the single Monge-Amp\'ere 
equation was considered.  We also notice that this test suggests that
$\|\alpha(t_M)-\alpha^\eps_h(t_m)\|_\lt$ may not converge, which suggests
that the convergence can only be possible in a weaker norm such as $H^{-2}$.

Next, we plot $\alpha^m_h$, and $\psi^m_h$ for $t_m=0.1,\ 0.4,$ 
and $1.0$ with $h=0.05$, $\Delta t=0.1$ in Figure
\ref{computedpsi}.  The figure shows that 
$\alpha^\eps_h(t_m)>0$ and the computed solution $\psi^m_h$ 
is clearly convex for all $t_m$.

\begin{figure}[ht]
\begin{center}
\includegraphics[angle=0,width=7.25cm,height=6.25cm]{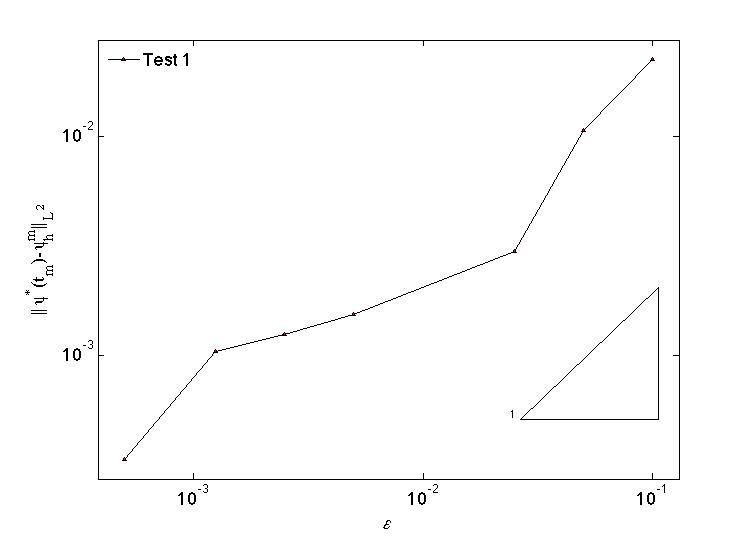}
\includegraphics[angle=0,width=7.25cm,height=6.25cm]{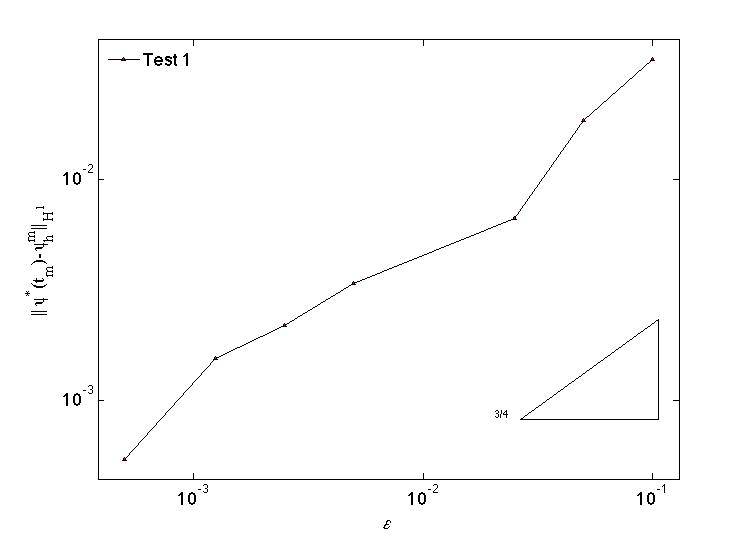}\\
\includegraphics[angle=0,width=7.25cm,height=6.25cm]{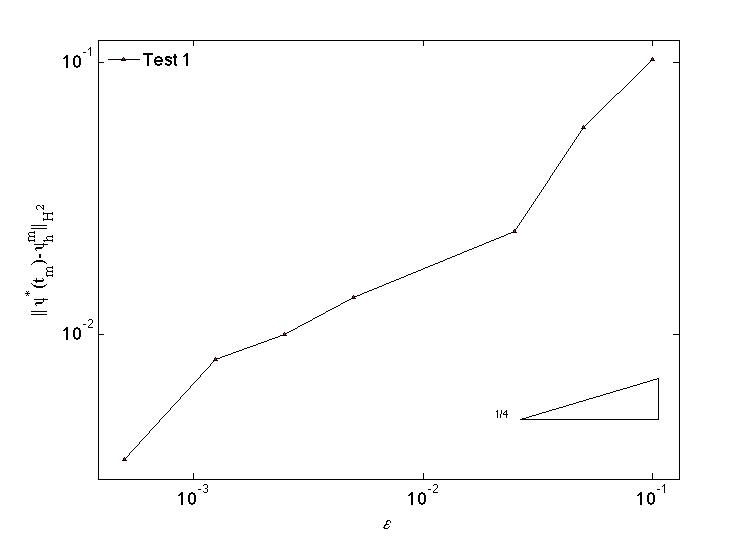}
\includegraphics[angle=0,width=7.25cm,height=6.25cm]{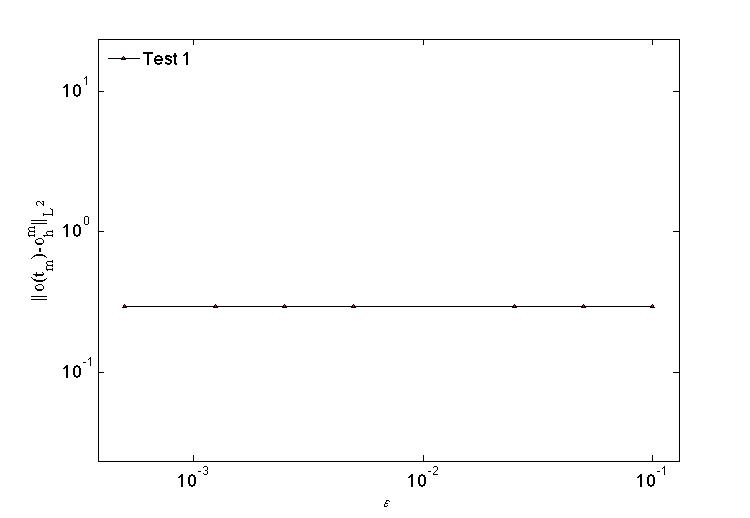}
\end{center}
\caption{\label{figtest1}{\scriptsize Test 1:  Change of
$\|\psi^*(t_m)-\psi^m_h\|$ w.r.t. $\eps$. $h=0.023$, $\Delta
t=0.0005$, $t_m=0.25$.}}
\end{figure}

\begin{figure}[ht]
\begin{center}
\includegraphics[angle=0,width=7.25cm,height=6.25cm]{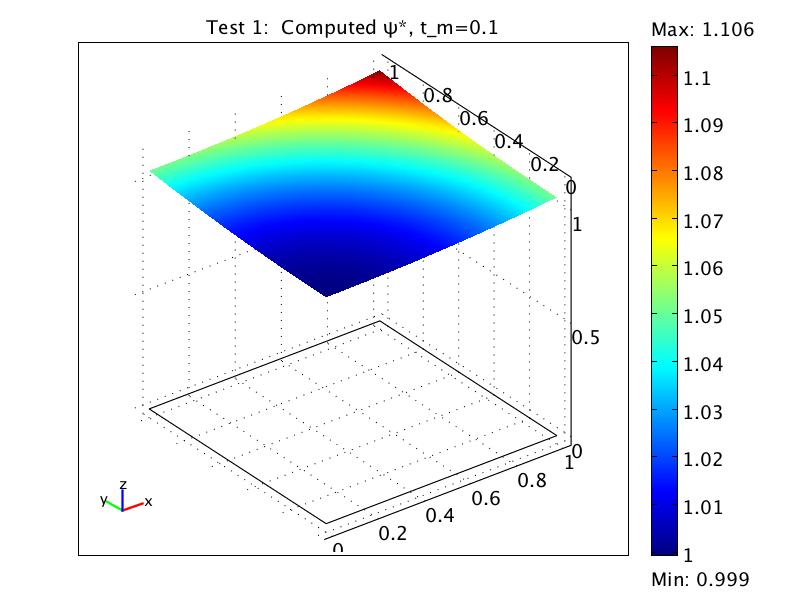}
\includegraphics[angle=0,width=7.25cm,height=6.25cm]{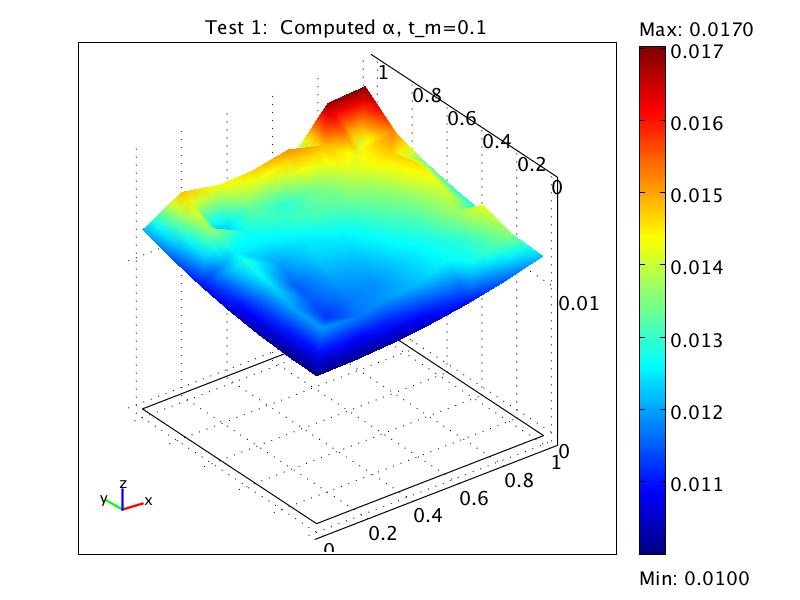}\\
\includegraphics[angle=0,width=7.25cm,height=6.25cm]{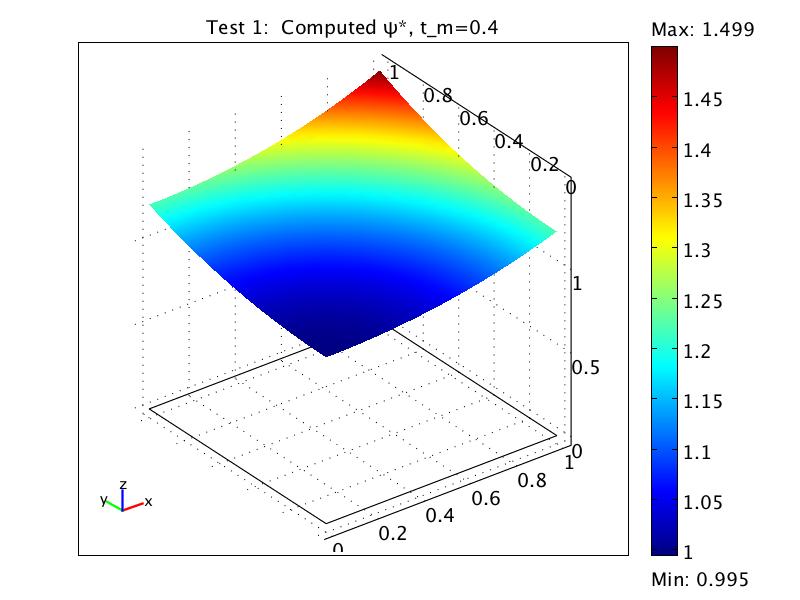}
\includegraphics[angle=0,width=7.25cm,height=6.25cm]{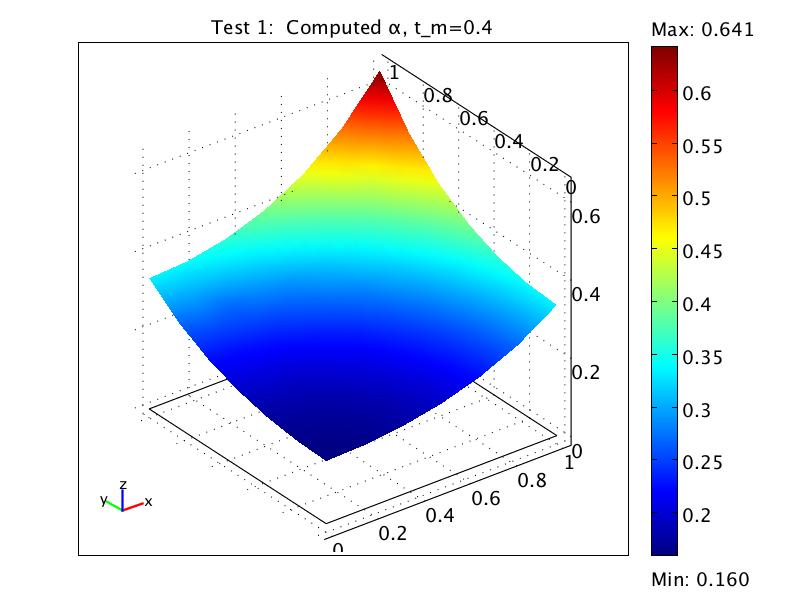}\\
\includegraphics[angle=0,width=7.25cm,height=6.25cm]{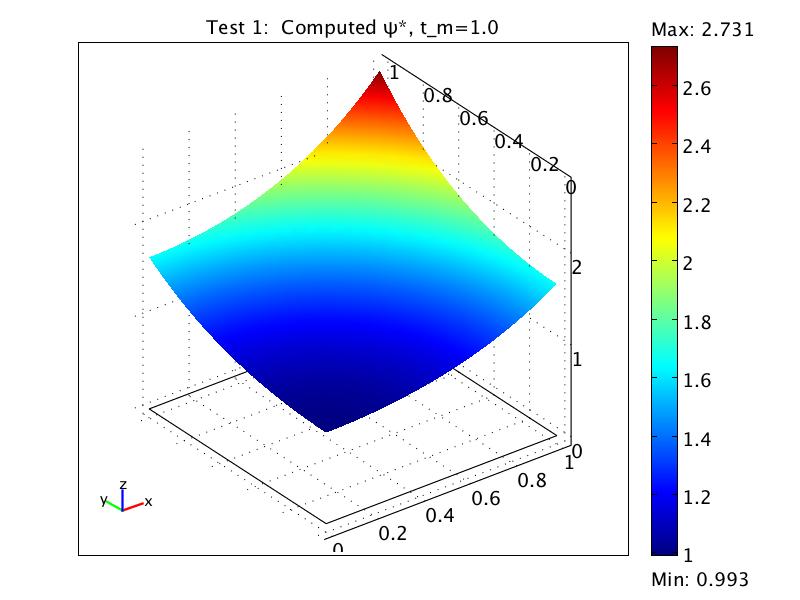}
\includegraphics[angle=0,width=7.25cm,height=6.25cm]{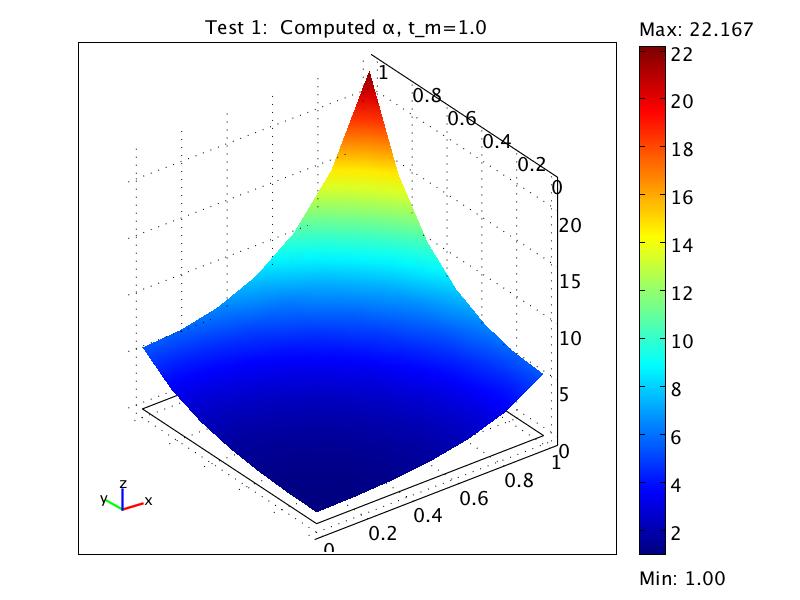}
\end{center}
\caption{\label{computedpsi}{\scriptsize Computed $\psi^m_h$ (left)
and $\alpha^m_h$ (right) for Test 1 
at $t_m=0.1$ (top), $t_m=0.4$ (middle), 
and $t_m=1.0$ (bottom).  $\Delta t=0.1$, $h=0.05$.}}
\end{figure}

\subsection{Test 2} 
The goal of this test is to calculate the rate of 
convergence of $\|\psi^\eps-\psi_h^\eps\|$ and $\|\alpha^\eps-\alpha^\eps_h\|$ 
for a fixed $\eps$ while varying $\Delta t$ and $h$ with the relation 
$\Delta t=h^2$.  We solve \eqref{geomethod1}--\eqref{geomethod2} but 
with a new boundary condition:  
$\frac{\partial \Delta \psi^\eps}{\partial \nu} =\phi^\eps$.  
Let $V^h_{g_N}$ and $W^h_{g_D}$ be defined in the same way as in
Test 1 using the following test functions and parameters
\begin{align*}
c(t) &=(\psi^\vepsi,1),\\
g_N &=te^{t(x_1^2+x_2^2)/2}(x_1\nu_{x_1}+x_2\nu_{x_2}), \\
g_D &=t^2(1+t(x_1^2+x_2^2))e^{t(x_1^2+x_2^2)}, \\
&\qquad -\eps t^2e^{t(x_1^2+x_2^2)/2}(8+8t(x_1^2+x_2^2)+t^2(x_1^2+x_2^2)^2), \\
F &=t\bigl(2+4t(x_1^2+x_2^2)+t^2(x_1^2+x_2^2)^2\bigr)e^{t(x_1^2+x_2^2)} \\
&\qquad -\frac{\eps t}{2}e^{t(x_1^2+x_2^2)/2}\bigl(32+56(x_1^2+x_2^2)t
+16t^2(x_1^2+x_2^2)^2+t^3(x_1^2+x_2^2)^3\bigr), \\
\phi^\eps&=\bigl(\bigl(4x_1t^2+x_2t^3(x_1^2+x_2^2)\bigr)\nu_{x_1}
+\bigl(4x_2t^2+x_2t^3(x_1^2+x_2^2)\nu_{x_2}\bigr)\bigr)e^{t(x_1^2+x_2^2)/2},  
\end{align*}
so that the exact solution of \eqref{intro4}--\eqref{intro9} is given by
\begin{align*}
\psi^\eps(x,t)&=e^{t(x_1^2+x_2^2)/2}, \\
\alpha^\eps(x,t)&=t^2(1+t(x_1^2+x_2^2))e^{t(x_1^2+x_2^2)} 
-\eps t^2e^{t(x_1^2+x_2^2)/2}(8+8t(x_1^2+x_2^2)+t^2(x_1^2+x_2^2)^2).
\end{align*}

The errors at time
$t_m=0.25$ are listed in Table 1 and are plotted verses
$\Delta t$ in Figure \ref{figtest4}.  The results clearly indicate that 
$\|\alpha^\eps(t_m)-\alpha^m_h\|_\lt=O(\Delta t)$ and 
$\|\psi^\eps(t_m)-\psi^m_h\|=O(\Delta t)$ in all norms as expected 
by the analysis in the previous section.  
\begin{figure}[ht]
\begin{center}
\includegraphics[angle=0,width=7.25cm,height=6.25cm]{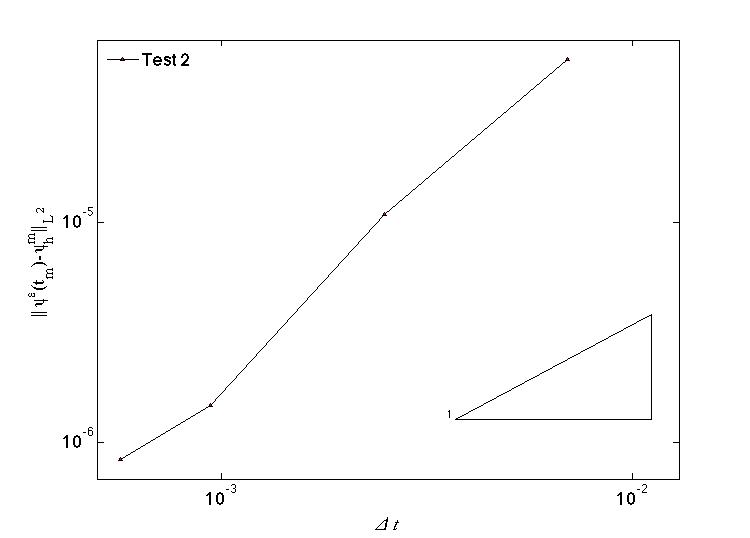}
\includegraphics[angle=0,width=7.25cm,height=6.25cm]{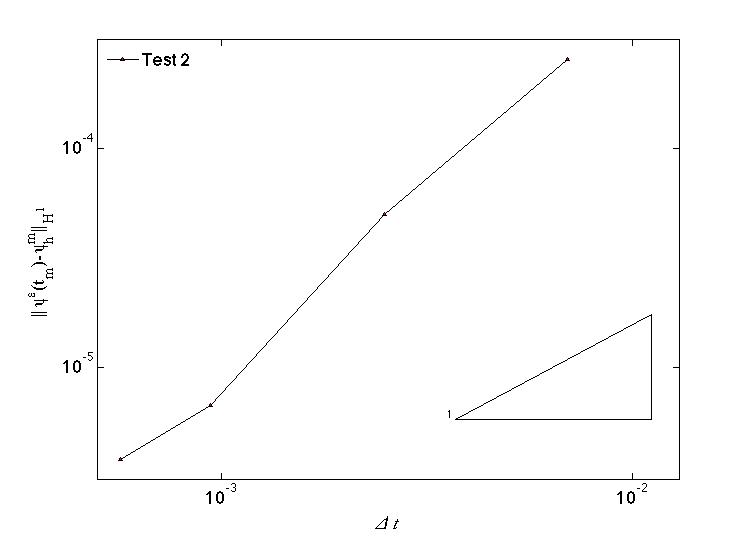}\\
\includegraphics[angle=0,width=7.25cm,height=6.25cm]{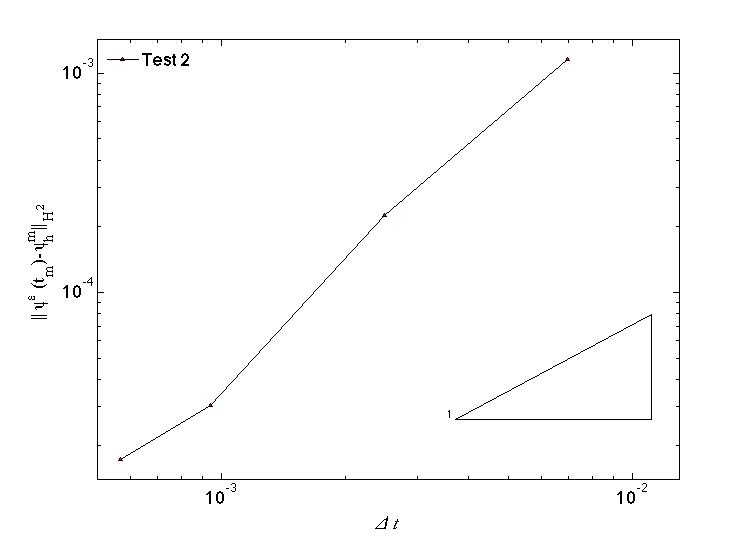}
\includegraphics[angle=0,width=7.25cm,height=6.25cm]{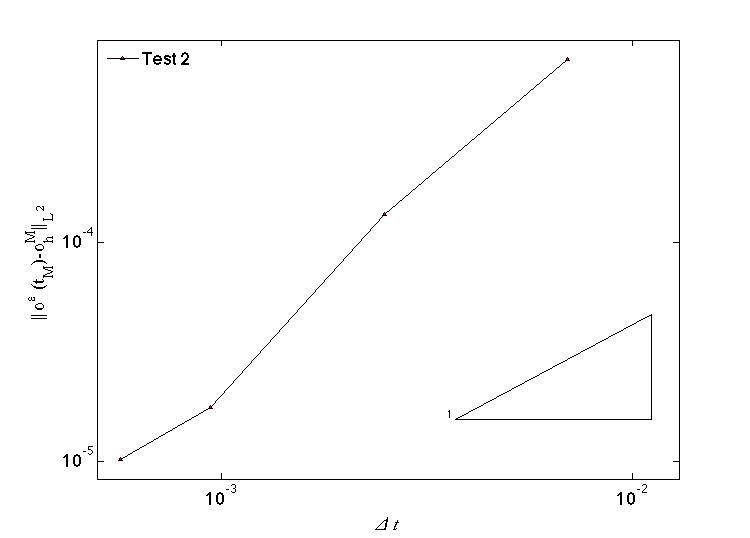}\end{center}
\caption{\label{figtest4}{\scriptsize Test 2:  Change of
$\|\psi^\eps(t_M)-\psi^M_h\|$ w.r.t. $\Delta t=h^2$. $\eps=0.01$,
$t_m=0.25$.}}
\end{figure}

\begin{table}[tbp]
\begin{center}
{\small \noindent\begin{tabular}{|l|l|l|l|l|l|}\hline  $h$
&$\Delta t$ & $\|\psi^\eps(t_m)-\psi^m_h\|_\lt$ &
$\|\psi^\eps(t_m)-\psi^m_h\|_{H^1}$ &
$\|\psi^\eps(t_m)-\psi^m_h\|_{H^2}$ &
$\|\alpha^\eps(t_m)-\alpha^m_h\|_\lt$\\
\hline 0.08333 &0.00694 &0.000214135 &0.000978608
&0.004434963
&0.003456864\\
\hline0.05    &0.0025  &6.15715E-05& 0.000281367& 0.001274611
&0.001009269\\
\hline 0.03066& 0.00094&  1.42185E-05& 6.49825E-05& 0.000294575&
0.000232896\\
\hline 0.02384 &0.00057 &7.13357E-06 &3.25959E-05 &0.000147586&
0.000116731\\
\hline
\end{tabular}}
\end{center} 
\centerline{\scriptsize  Table 1: Change of $\|\psi^\eps(t_M)-\psi^M_h\|$ w.r.t.
$\Delta t=h^2$.  $\eps=0.01$, $t_m=0.25$.} 
\end{table}

\subsection{Test 3}
For this test, we solve problem \eqref{geomethod1}--\eqref{geomethod2} 
with domain $U=(0,6)^2$ and initial condition
\begin{align*}
\alpha_0(x)=\frac18 \chi_{[2,4]\times [2.25,3.75]}(4-x_1)(x_1-2)(3.75-x_2)(x_2-2.25),
\end{align*}
where $\chi_{[2,4]\times [2.25,3.75]}$ denotes the characteristic 
function of the set $[2,4]\times [2.25,3.75]$. We comment that the exact 
solution of this problem is unknown. We plot the computed $\alpha^m_h$ 
and $\psi^m_h$ at times $t_m=0$, $t_m=0.05$, and $t_m=0.1$, and $t_m=0.15$ 
in Figure \ref{figtest5} with parameters $\Delta t=0.001,\ h=0.05$, and $\eps=0.01$.  
As expected, the figure shows that $\alpha_h^m >0$ and $\psi_h^m$ is convex for all $m$.

\begin{figure}[ht]
\begin{center}
\includegraphics[angle=0,width=7.25cm,height=6.25cm]{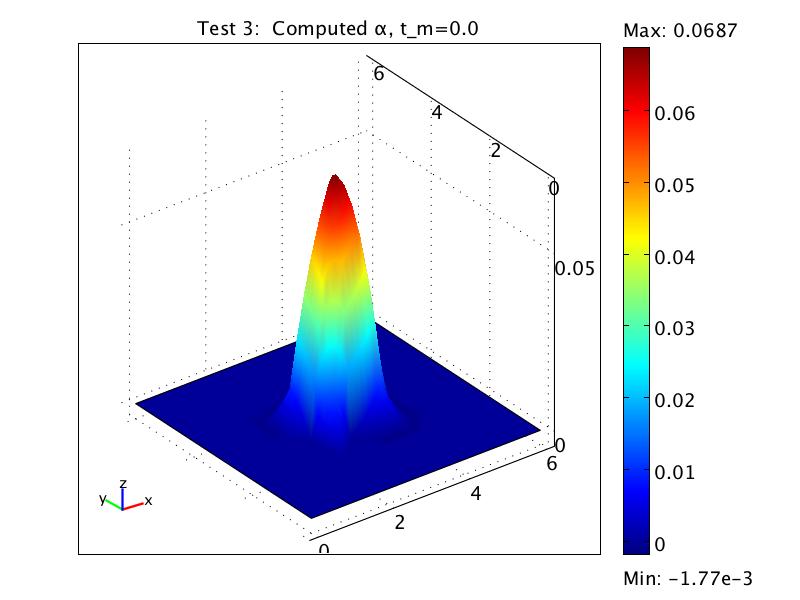}
\includegraphics[angle=0,width=7.25cm,height=6.25cm]{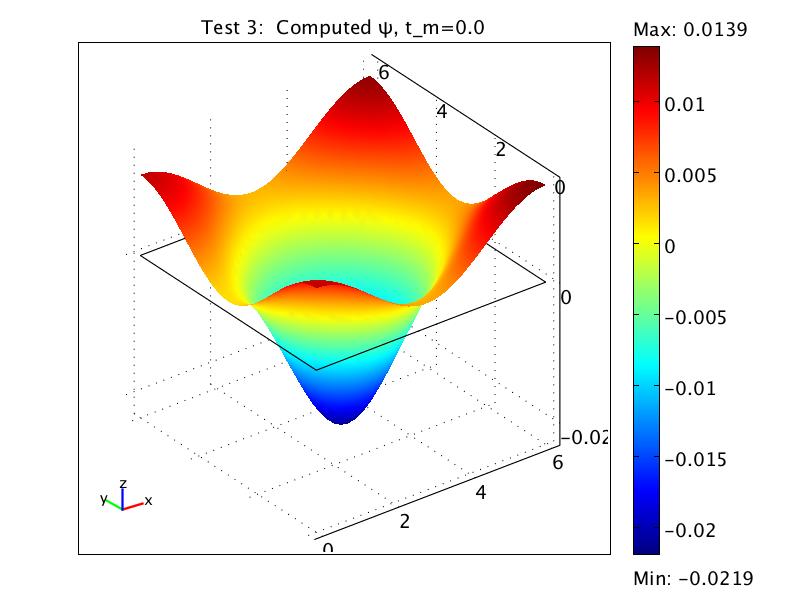}\\
\includegraphics[angle=0,width=7.25cm,height=6.25cm]{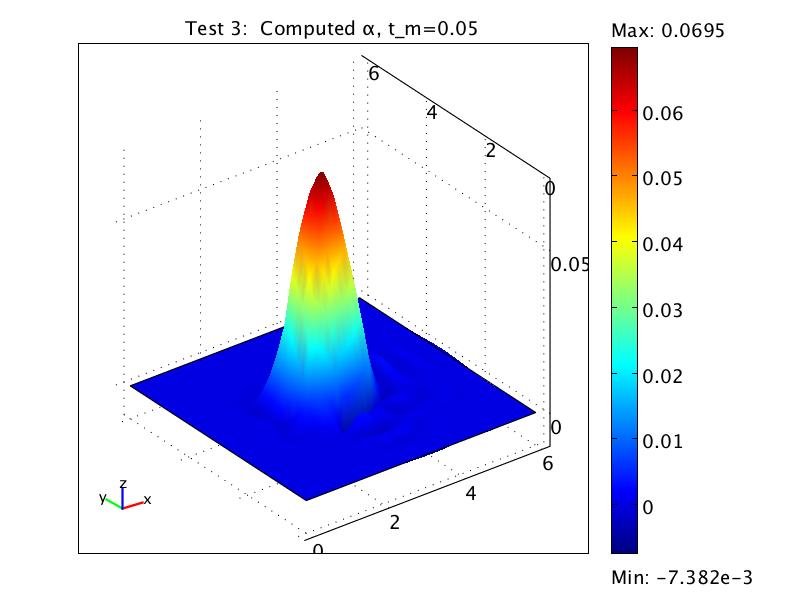}
\includegraphics[angle=0,width=7.25cm,height=6.25cm]{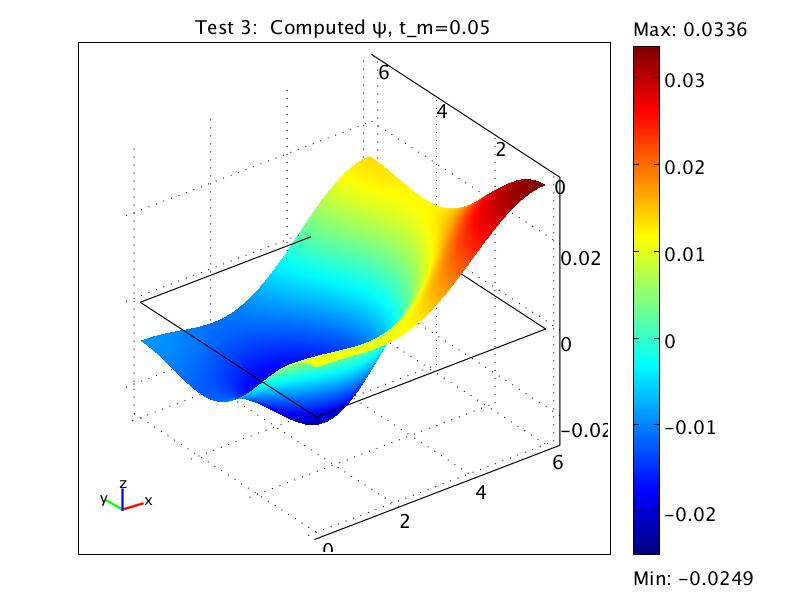}\\
\includegraphics[angle=0,width=7.25cm,height=6.25cm]{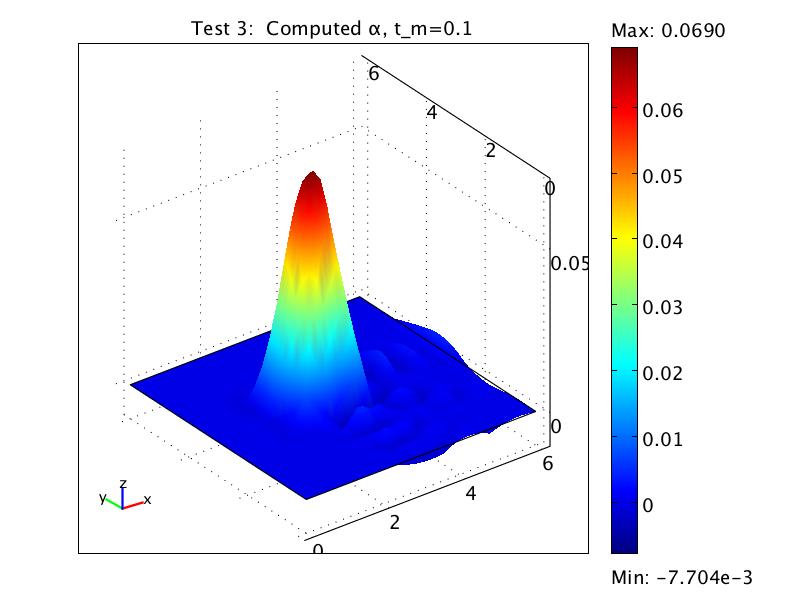}
\includegraphics[angle=0,width=7.25cm,height=6.25cm]{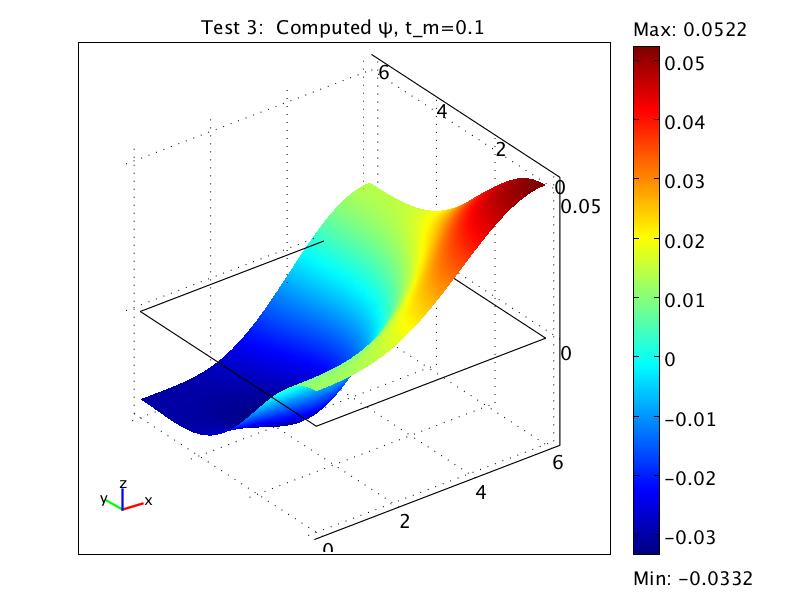}
\end{center}
\caption{\label{figtest5}{\scriptsize Test 3:  Computed $\alpha^m_h$ (left) 
and $\psi^m_h$ (right) at $t_m=0$ (top), $t_m=0.05$ (middle), and $t_m=0.1$ (bottom).
$\Delta t=0.01,\ h=0.05,\ \eps=0.01$}}
\end{figure}


\end{document}